\documentclass[12pt]{article}
\usepackage{amsmath,amsthm,amscd,amsfonts}

\setlength{\topmargin}{-1cm} \setlength{\textheight}{22cm}
\setlength{\textwidth}{15cm} \setlength{\oddsidemargin}{+0.5cm}
\setlength{\evensidemargin}{-5.5cm}

\newcounter{num}
\setcounter{num}{1} \setcounter{equation}{0}

\newtheorem{theorem}{Theorem}[section]

\theoremstyle{plain}

\newtheorem{prop}[theorem]{Proposition}
\newtheorem{lemma}[theorem]{Lemma}

\newtheorem*{ThA}{Theorem~A}

\newtheorem*{assum}{Assumption}

\theoremstyle{remark}

\theoremstyle{definition}

\newtheorem*{question}{Question}

\DeclareMathOperator{\rank}{rank}
\DeclareMathOperator{\diag}{diag}

\numberwithin{equation}{section} 


\begin{document}

\baselineskip 22pt plus 2pt

\newcommand{\be}        {\begin{eqnarray}}
\newcommand{\ee}        {\end{eqnarray}}
\newcommand{\pl}{\partial}
\newcommand{\sbs}{\subset}
\newcommand{\vr}{\varphi}
\newcommand{\lm}{\lambda}
\newcommand{\eps}{\varepsilon}
\newcommand{\nb}{\nabla}
\newcommand{\wt}{\widetilde}

\newcommand{\cV}      {{\cal V}}
\newcommand{\cE}      {{\cal E}}
\newcommand{\cD}      {{\cal D}}
\newcommand{\cK}      {{\cal K}}
\newcommand{\cB}      {{\cal B}}
\newcommand{\cI}      {{\cal I}}
\newcommand{\cM}      {{\cal M}}
\newcommand{\cA}      {{\cal A}}
\newcommand{\cR}      {{\cal R}}
\newcommand{\cT}      {{\cal T}}
\newcommand{\cP}      {{\cal P}}
\newcommand{\cC}      {{\cal C}}
\newcommand{\cQ}      {{\cal Q}}
\newcommand{\cG}      {{\cal G}}
\newcommand{\cW}      {{\cal W}}
\newcommand{\cL}      {{\cal L}}
\newcommand{\cF}      {{\cal F}}
\newcommand{\cS}      {{\cal S}}
\newcommand{\cH}      {{\cal H}}
\newcommand{\cN}      {{\cal N}}
\newcommand{\AAA}       {{\Bbb A}}     
\newcommand{\aaa}       {{\LBbb A}}
\newcommand{\BB}        {{\Bbb B}}
\newcommand{\bb}        {{\LBbb B}}
\newcommand{\CC}        {\mathbb{C}}
\newcommand{\cc}        {{\LBbb C}}
\newcommand{\DD}        {{\Bbb D}}
\newcommand{\dd}        {{\LBbb D}}
\newcommand{\EEE}       {{\Bbb E}}
\newcommand{\eee}       {{\LBbb E}}
\newcommand{\FF}        {\mathbb{F}} 
\newcommand{\F}         {\mathbb{F}} 
\newcommand{\ff}        {{\LBbb F}}
\newcommand{\GGG}       {{\Bbb G}}
\newcommand{\ggg}       {{\LBbb G}}
\newcommand{\HH}        {{\Bbb H}}
\newcommand{\hh}        {{\LBbb H}}
\newcommand{\II}        {{\Bbb I}}
\newcommand{\ii}        {{\LBbb I}}
\newcommand{\JJ}        {{\Bbb J}}
\newcommand{\jj}        {{\LBbb J}}
\newcommand{\KK}        {{\Bbb K}}
\newcommand{\kk}        {{\LBbb k}}
\newcommand{\LLL}       {{\Bbb L}}
\newcommand{\lll}       {{\LBbb L}}
\newcommand{\MM}        {{\Bbb M}}
\newcommand{\mm}        {{\LBbb M}}
\newcommand{\NN}        {\mathbb{N}}
\newcommand{\nn}        {{\LBbb N}}
\newcommand{\OO}        {{\Bbb O}}
\newcommand{\oo}        {{\LBbb O}}
\newcommand{\PP}        {{\Bbb P}}
\newcommand{\pp}        {{\LBbb P}}
\newcommand{\QQ}        {{\Bbb Q}}
\newcommand{\qq}        {{\LBbb Q}}
\newcommand{\RR}       { \mathbb{R}}
\newcommand{\rr}        {{\LBbb R}}
\newcommand{\SSS}       {{\Bbb S}}
\newcommand{\sss}       {{\LBbb S}}
\newcommand{\TTT}       {{\Bbb T}}
\newcommand{\ttt}       {{\LBbb t}}
\newcommand{\UU}        {{\Bbb U}}
\newcommand{\VV}        {{\Bbb V}}
\newcommand{\vv}        {{\LBbb V}}
\newcommand{\WW}        {{\Bbb W}}
\newcommand{\ww}        {{\LBbb W}}
\newcommand{\XX}        {{\Bbb X}}
\newcommand{\xx}        {{\LBbb X}}
\newcommand{\YY}        {{\Bbb Y}}
\newcommand{\yy}        {{\LBbb Y}}
\newcommand{\ZZ}        {\mathbb{Z}}
\newcommand{\zz}        {{\LBbb Z}}
\newcommand{\x}{\mathbf{x}}
\newcommand{\y}{\mathbf{y}}
\newcommand{\z}{\mathbf{z}}
\newcommand{\1}{\mathbf{1}}
\newcommand{\rO}{\mathrm{O}}
\newcommand{\rS}{\mathrm{S}}
\newcommand{\gl}{\mathbf{GL}}
\newcommand{\lan}{\langle}
\newcommand{\ran}{\rangle}
\newcommand{\an}[1]{\lan#1\ran}
\newcommand{\mr}{\mathrm{mr}}
\newcommand{\trans}{^\top}
\title{On the minimum rank of a graph over finite fields}
\author{{ Shmuel  Friedland and Raphael Loewy}\\ \\
Department of Mathematics, Statistics, and Computer Science,\\
        University of Illinois at Chicago\\
        Chicago, Illinois 60607-7045, USA\\
         \and
Department of Mathematics\\
Technion -- Israel Institute of Technology\\
32000 Haifa, Israel}


\date{June 27, 2010}                                         

\maketitle


\begin{abstract}
In this paper we deal with two aspects of the minimum rank of a simple undirected graph $G$ on $n$
vertices over a finite field $\FF_q$ with $q$ elements, which is denoted by $\mr(\FF_q,G)$.
In the first part of this paper we show that the average minimum rank of simple undirected labeled graphs
on $n$ vertices over $\FF_2$ is $(1-\varepsilon_n)n$, were $\lim_{n\to\infty} \varepsilon_n=0$.

In the second part of this paper we assume that $G$ contains a clique $K_k$ on $k$-vertices.  We show that if $q$ is not a prime
then $\mr(\FF_q,G)\le n-k+1$ for $4\le k\le n-1$ and $n\ge 5$.
It is known that $\mr(\FF_q,G)\le 3$ for $k=n-2$, $n\ge 4$
and $q\ge 4$.  We show that for $k=n-2$ and each $n\ge 10$ there exists a graph $G$ such that $\mr(\FF_3,G)>3$. For $k=n-3$, $n\ge 5$ and $q\ge 4$ we show that $\mr(\FF_q,G)\le 4$.

\end{abstract}
 \noindent {\bf 2010 Mathematics Subject Classification:} 05AC50, 11T99, 15A03.

 \noindent{\bf Keywords:} Minimum rank, scaled average minimum rank, finite field, clique, matrix, graph.

\vspace{1cm}

 \section{Introduction.}
 Let $G=(V,E)$ be a simple undirected graph on the set of vertices $V$  and the set of edges $E$.
 Set $n:=|V|$, and identify $V$ with $\an{n}:=\{1,\ldots,n\}$. Denote by $ij\in  E$ the edge connecting
 the vertices $i$ and $j$.  Let $\FF$ be a field.  For a prime $p$ let $\FF_p$ be the finite field
 of integers modulo $p$.
 Denote by $\rS(\FF,G)$ the set of all symmetric $n\times n$ matrices $A=[a_{ij}]_{i,j=1}^n$ with entries
 in $\FF$, and such that $a_{ij}\ne 0, \ i\ne j$, exactly when $ij\in E$.
 There is no restriction on the main diagonal entries of $A$. Let
 \[
 \mr(\FF,G)=\min\bigl\{\rank A:A\in \rS(\FF,G)  \bigr\}.
 \]
 The problem of determining $\mr(\FF,G)$ has been of significant interest in recent years.

 Here we consider two aspects of this problem.  The first aspect is estimating the average
 of the minimum rank over $\cG_n$, the set of all labeled graphs on $\an{n}$.
 Since the complete graph on $n$ vertices, denoted by
 $K_n=(\an{n},E_n)$ has $n\choose 2$ edges, we have that
 $|\cG_n|=2^{n\choose 2}$.
 Let
 \begin{equation}\label{avminrank}
 \alpha_{n}(\F)=\frac{1}{n2^{n\choose 2}} \sum_{G\in\cG_n} \mr(\F,G)
 \end{equation}
 be the \emph{scaled} $n$-average minimum rank.  It is a very interesting
 problem to estimate $\alpha_n(\F)$ for large $n$, and
 \begin{equation}\label{uplowavmrank}
 \bar\alpha(\F):=\limsup_{n\to\infty} \alpha_n(\F), \quad
 \underline{\alpha}(\F):=\liminf_{n\to\infty}\alpha_n(\F).
 \end{equation}
 In this note we show
 \begin{equation}\label{valminavrankF2}
 \underline{\alpha}(\F_2)=\bar\alpha(\F_2)=1.
 \end{equation}

 In a recent paper Hall, Hogben, Martin and Shader \cite{HHMS} have shown
 that for $n$ sufficiently large
 $0.146907<\alpha_n(\RR)<0.5+\frac{\sqrt{7\ln n}}{n}$.
 The second aspect of this problem is estimating the minimum rank of graphs which contain a clique.
 A {\em $k$-clique} in $G$ is a complete graph $K_k$ that
 occurs as an induced subgraph of $G$.

 We are interested in the following:
 \begin{question}
 Suppose $G$ contains a $k$-clique. When is $\mr(\FF,G)\le n-k+1$?
 \end{question}

 To avoid trivialities, assume that $n=|V|\ge 3$.  It is well known that the inequality $\mr(\FF,G)\le n-1$ always holds. It is also well known that $\mr(\FF,K_n)=1$.  Hence, our question has an affirmative answer for $k=1,2,n$. In fact, the same holds true for $k=3$, by papers of Fiedler \cite{F} and Bento-Leal Duarte~\cite{BD}. Hence we may assume from now on that
 \[
 4\le k\le n-1.
 \]

 The question has also an affirmative answer in case when $\FF$ is an infinite field.
 This appears implicitly in the paper of Johnson, Loewy and Smith \cite{JLS}.  (See \S3.)
 In her M.Sc. thesis \cite{B} Bank gave an affirmative answer to our question for any finite field $\FF$
 with $|\FF|\ge k-1$. While giving us some information on the finite field case, this result has a drawback,
 namely, for a large $k$, the field is required to be large. However, as we will see here and in subsequent
 sections, at least for the special cases $k=n-1, \ k=n-2$ and $k=n-3$ small fields suffice to get an
 affirmative  answer to our question.
 Thus, the case of a finite field is still of significant interest.
 It follows from Barrett, van der Holst and Loewy \cite{BvdHL} that our question has an
 affirmative answer in case $k=n-1$ and $\FF$ is any field which is not $\FF_2$. As for $\FF_2$,
 let $n\ge 5$ and consider the graph $G$ obtained from $K_{n-1}$ by adding a new vertex $v$ and
 connecting it to exactly two of the vertices of $K_{n-1}$. Then it follows from \cite{BvdHL} that
 $\mr(\FF_2,G)=3$. Thus, over $\FF_2$ our question does not have an affirmative answer in all cases.

 We now briefly survey the contents of this paper.
 In Section~2 we prove the equality (\ref{valminavrankF2});
 in Section~3 we deal briefly with the case of an infinite field;
 in Section~4 we deal with the case
 $k=n-2$; in Section~5 we give an affirmative answer to our question for any finite field $\FF$ which is
 not a prime field; in Section~6 we consider the case
 $k=n-3$.
 \section{Scaled average minimum rank over $\FF_2$}
 We first recall some known results on certain classes of matrices $\F_2^{n\times n}$.
 \begin{lemma}\label{cardof2} Let $\rO(n,\F_2)$ be the orthogonal group of $n\times n$ matrices.
 Then $O(n):=|\rO(n,\F_2)|$ is equal to $C(n)2^{\frac{n(n-1)}{2}}$,
 where
 \begin{equation}\label{numorthmatrices}
 C(1)=C(2)=1,  \textrm{ and }
 C(n)=\prod_{i=1}^{\lfloor \frac{n-1}{2}\rfloor} (1-2^{-2i})   \textrm{ for } n>2.
 \end{equation}
 \end{lemma}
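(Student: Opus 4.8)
The plan is to count orthogonal matrices over $\FF_2$ column by column, using the standard approach for counting the number of ordered orthonormal bases of the quadratic space $(\FF_2^n, q)$ with $q(x)=x_1^2+\cdots+x_n^2=x\trans x$. An orthogonal matrix $A\in\rO(n,\FF_2)$ is exactly a matrix whose columns $v_1,\dots,v_n$ satisfy $v_i\trans v_j=\delta_{ij}$; over $\FF_2$ the condition $v_i\trans v_i=1$ says $v_i$ has an odd number of $1$'s. So I want to count sequences $(v_1,\dots,v_n)$ that are mutually orthogonal and each of odd weight. First I would set up the recursion: having chosen $v_1$ (a nonzero vector of odd weight), the remaining columns must lie in $v_1^\perp$, an $(n-1)$-dimensional subspace; but $v_1\in v_1^\perp$ since $v_1\trans v_1=1\ne 0$ is \emph{not} what orthogonality to self means — rather $v_1^\perp=\{x: x\trans v_1=0\}$ and $v_1\trans v_1=1$ means $v_1\notin v_1^\perp$. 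Then one restricts the quadratic form to $v_1^\perp$ and continues. The subtlety is that the restriction of the form to a hyperplane orthogonal to an anisotropic vector is again a nondegenerate form of the same type (it is the "$+$" type coming from a sum of squares), so the recursion stays within the same family; this is the standard Witt-type reduction.

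Concretely, I would let $N(n)$ denote the number of ordered orthonormal $n$-tuples spanning $\FF_2^n$ — which is $|\rO(n,\FF_2)|$ — and more generally track the count of vectors of odd weight in the relevant space. The key computation is: the number of vectors $x\in\FF_2^n$ with $x\trans x=1$ equals the number of odd-weight vectors, which is $2^{n-1}$. After choosing the first column $v_1$ (there are $2^{n-1}$ choices, all nonzero), the second column must be an odd-weight vector in $v_1^\perp$; since $v_1^\perp$ is a hyperplane not containing $v_1$, the form restricted to it is again of "sum of squares" type in dimension $n-1$, contributing $2^{n-2}$ choices, and so on. This would give $|\rO(n,\FF_2)|=\prod_{i=0}^{n-1}2^{n-1-i}\cdot(\text{correction})$; the naive product $\prod 2^{n-1-i}=2^{\binom{n}{2}}$ is exactly the $2^{n(n-1)/2}$ factor, and the correction factor — accounting for the fact that not every odd-weight vector in the current hyperplane is admissible, because it must also be non-orthogonal to itself, equivalently the count of odd-weight vectors in a given orthogonal complement is sometimes $2^{m-1}$ and sometimes $2^{m-1}\pm 2^{\lfloor (m-1)/2\rfloor}$ depending on the parity type — is what produces $C(n)=\prod_{i=1}^{\lfloor(n-1)/2\rfloor}(1-2^{-2i})$.

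The main obstacle — and the step I would spend the most care on — is tracking the \emph{type} of the quadratic form as the dimension drops by one at each stage, and getting the associated vector counts exactly right. Over $\FF_2$ a nondegenerate quadratic form in $m$ variables is either of plus type or minus type (or, in odd dimension, the unique type), and the number of vectors with $q(x)=1$ is $2^{m-1}-\varepsilon 2^{m/2-1}$ for a precise sign $\varepsilon$; passing to the orthogonal complement of an anisotropic vector changes $m$ to $m-1$ and changes the type in a controlled way. Carefully chaining these together across $n$ steps, the product of the correction ratios telescopes into $\prod_{i=1}^{\lfloor(n-1)/2\rfloor}(1-2^{-2i})$, with the base cases $C(1)=C(2)=1$ checked by hand (in dimensions $1$ and $2$ the orthogonal group over $\FF_2$ has order $1$ and $2$ respectively, matching $C(n)2^{n(n-1)/2}$). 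Alternatively, if this direct count gets unwieldy, I would fall back on quoting the classical formula for $|\rO_n^+(\FF_q)|$ with $q=2$ (which is in standard references on classical groups) and simply verify it rearranges into the stated form; but the self-contained column-by-column argument above is the approach I would present.
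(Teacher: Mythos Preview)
The paper does not give a proof of this lemma at all: it simply refers to MacWilliams' \emph{Amer.\ Math.\ Monthly} article and remarks that the formula there can be rewritten in the stated form. So your fallback option---cite a standard reference and check the algebra---is exactly what the paper does.

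Your self-contained column-by-column count is a reasonable strategy (it is essentially the route MacWilliams herself takes), but the way you frame the recursion has a genuine characteristic-$2$ pitfall. Over $\F_2$ one has $x\trans x=\sum x_i^2=\sum x_i$, so the ``quadratic form'' you invoke is actually a \emph{linear} functional, namely $\mathbf{1}\trans x$. Consequently the Witt-type dichotomy into plus/minus types that you appeal to does not govern the recursion here; your proposed count $2^{m-1}\pm 2^{\lfloor(m-1)/2\rfloor}$ for odd-weight vectors in the successive orthogonal complements is not what actually occurs. For instance, with $n=3$ and $v_1=(1,1,1)$, the hyperplane $v_1^{\perp}$ contains \emph{no} odd-weight vector at all. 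The correct bookkeeping is different: after choosing orthonormal $v_1,\dots,v_k$, the number of admissible $v_{k+1}$ is $2^{n-k-1}$ unless $\mathbf{1}=v_1+\cdots+v_k$, in which case it is $0$; tracking when this degenerate case can occur (it depends on the parity of $k$ and of $n$) is what produces the factors $(1-2^{-2i})$. This is fixable, but it is not the argument you sketched.

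A second caution about the fallback: the group $\rO(n,\F_2)=\{A:A\trans A=I\}$ is \emph{not} the classical orthogonal group $O_n^{+}(\F_2)$ attached to a quadratic form in characteristic $2$; for example $|\rO(4,\F_2)|=48$ while $|O_4^{+}(2)|=72$. So quoting the standard $|O_n^{\pm}(q)|$ formula would give the wrong answer. You need a reference that treats precisely the bilinear-form group $\{A:A\trans A=I\}$ over $\F_2$, which is why the paper points to MacWilliams.
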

 See for example \cite[p. 158]{MW69}. (Note that the formula in \cite{MW69} has a different equivalent form.)

 Denote by $\rS(n,\F_2)\subset \F_2^{n\times n}$ the subspace of symmetric matrices.
 \begin{lemma}\label{canformsymf2}  Let $A\in\rS(n,\F_2)$ have rank $k$.  Then the $A$ has the
 following form.
 \begin{enumerate}
 \item\label{oddk}
 If $k$ is odd then $A=XX\trans$, where $X\in\F_2^{n\times k}$
 and $\rank X=k$.
 \item\label{evenk}
 If $k=2l$ is even then $A$ has two possible nonequivalent
 forms.  First $A=XX\trans$, where $X\in\F_2^{n\times k}$
 and $\rank X=k$.  Second $A=X(\oplus_{j=1}^l H_2) X\trans$, where $X\in\F_2^{n\times
 k}$, $\rank X=k$ and $H_2:=\left[\begin{array}{cc} 0&1\\1&0\end{array}\right]$.

 \end{enumerate}
 \end{lemma}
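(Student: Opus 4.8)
The plan is to exhibit, for every symmetric matrix $A \in \rS(n,\F_2)$ of rank $k$, a factorization of the prescribed shape, by combining the standard congruence-classification of symmetric bilinear forms over $\F_2$ with a rank-factorization argument. First I would recall that over $\F_2$ every symmetric matrix $A$ of rank $k$ is congruent (via an invertible $P$) to a block matrix $B \oplus 0_{n-k}$, where $B$ is an invertible symmetric $k\times k$ matrix; this is the usual ``compression to the column space'' step, using that the radical of the form has dimension $n-k$ and picking a complementary subspace on which the restricted form is nondegenerate. Writing $A = P(B\oplus 0)P\trans$ and letting $Y \in \F_2^{n\times k}$ be the first $k$ columns of $P$, we get $A = YBY\trans$ with $\rank Y = k$. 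So the whole problem reduces to classifying invertible symmetric $B$ over $\F_2$ up to the equivalence $B \mapsto QBQ\trans$ with $Q\in\gl_k(\F_2)$, and then absorbing $Q$ into $Y$ (i.e.\ replacing $Y$ by $YQ$, which keeps rank $k$).

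Next I would invoke the classical classification of nondegenerate symmetric bilinear forms over $\F_2$ (Albert / Dieudonné). The key dichotomy is whether the form is \emph{alternating} (all diagonal entries zero, equivalently $x\trans B x = 0$ for all $x$) or not. A nondegenerate alternating form exists only in even dimension $k = 2l$, and there is exactly one congruence class, represented by $\oplus_{j=1}^{l} H_2$. A nondegenerate \emph{non-alternating} symmetric form, in any dimension $k$, is congruent to the identity $I_k$ (equivalently $B = I_k$, giving $A = XX\trans$); the standard Gram--Schmidt-type argument works because non-alternating means some vector $v$ has $v\trans B v = 1$, one splits off $\langle v\rangle$ orthogonally, and induction applies — the only subtlety being to check the induction does not get stuck, which is handled by the fact that a nondegenerate form that is not alternating always contains an anisotropic vector in any nondegenerate subspace of positive dimension. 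This yields exactly: for $k$ odd, only the non-alternating class occurs (since the alternating class needs even $k$), so $A = XX\trans$; for $k = 2l$ even, both classes occur and they are inequivalent, giving the two stated forms. The inequivalence is immediate: $XX\trans$ has a nonzero diagonal entry in the $X$-image direction corresponding to an anisotropic vector, while $X(\oplus H_2)X\trans$ restricted to its column space is alternating — more precisely, $\diag$ of $X(\oplus H_2)X\trans$ need not vanish, so one should instead distinguish the classes by the Arf-type invariant / by counting $|\{v : v\trans B v = 0\}|$, which is $2^{k-1}+2^{k/2-1}$ versus $2^{k-1}$ in the two even cases and is a congruence invariant.

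The main obstacle I anticipate is not the existence of the factorizations but pinning down \emph{why there are exactly these classes and no others} — i.e.\ the full force of the $\F_2$ classification theorem, including the fact that in odd dimension the alternating form is impossible (a nondegenerate alternating form forces even rank, since an alternating matrix over any field has even rank) and that in even dimension the two are genuinely distinct. I would handle the first point by the even-rank lemma for alternating matrices, and the second by exhibiting a congruence-invariant that separates them (the count of isotropic vectors above, or equivalently observing that $XX\trans$ represents $1$ while $X(\oplus H_2)X\trans$, after a suitable further reduction, does not — care is needed here because the ambient $n\times n$ matrix can have nonzero diagonal even in the alternating case, so the invariant must be read off from $B$, not from $A$ directly). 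A clean way to present the whole argument is: (i) reduce to invertible $B$ via congruence; (ii) cite the classification of nondegenerate symmetric forms over $\F_2$ into the two families $I_k$ and $\oplus H_2$ (with the parity constraint on the latter); (iii) translate back, absorbing the congruence into $X$. I would lean on a standard reference (e.g.\ the Albert/Dieudonné classification, or MacWilliams--Sloane) for step (ii) rather than reproving it, noting only the two lemmas (alternating $\Rightarrow$ even rank; non-alternating nondegenerate $\Rightarrow$ diagonalizable to $I_k$) that make it self-contained.
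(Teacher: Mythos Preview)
The paper does not prove this lemma at all: it simply cites \cite[Thm~2.6]{Fri91} and moves on. So there is no ``paper's own proof'' to compare against, and your proposal is in fact supplying what the paper omits.

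Your approach is the standard and correct one: reduce by congruence to a nondegenerate $k\times k$ block $B$, then invoke the classification of nondegenerate symmetric bilinear forms over $\F_2$ into the alternating class $\oplus_{j=1}^l H_2$ (even $k$ only) and the non-alternating class $I_k$, and finally absorb the congruence matrix into $X$. The parity constraint and the two-class dichotomy are exactly right.

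Two small corrections worth making if you flesh this out. First, the induction for ``non-alternating $\Rightarrow$ congruent to $I_k$'' can genuinely get stuck: after splitting off an anisotropic vector, the orthogonal complement may become alternating. The fix is the identity $I_1\oplus H_2\cong I_3$ over $\F_2$ (one checks it by hand), which lets you convert any $I_m\oplus(\oplus H_2)$ with $m\ge 1$ back to $I_k$; you allude to this but should state it explicitly. Second, your isotropic-vector counts are off: over $\F_2$ the map $x\mapsto x\trans Bx$ is \emph{linear} (since $x_i^2=x_i$ and the cross terms double up to zero), so for $B=I_k$ one gets $|\{x:\sum x_i=0\}|=2^{k-1}$, while for $B=\oplus H_2$ every vector is isotropic, giving $2^k$. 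The numbers $2^{k-1}\pm 2^{k/2-1}$ you wrote belong to the quadratic-form (Arf invariant) story, not the bilinear one. The cleanest invariant separating the two even classes is simply ``alternating versus not'', i.e.\ whether $x\trans Bx\equiv 0$, which is manifestly preserved under congruence; you already identify this, so just drop the vector count.
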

 See for example \cite[Thm 2.6]{Fri91}.
 Let $J_{2n}\in\rS(2n,\F_2)$ be the direct sum of $n$ copies of $H_2$.
 Denote by
 $\textrm{Sym}(2n,\F_2):=\{T\in \F^{2n\times 2n}, T\trans J_{2n} T=J_{2n}\}$ the symplectic group
 of order $2n$ over $\F_2$.
 \begin{lemma}\label{cardsym2nf2}  The cardinality of $\textrm{Sym}(2n,\F_2)$ is given by $O(2n+1)$.
 \end{lemma}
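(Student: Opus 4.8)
The plan is to establish the identity by a direct count, comparing the classical order formula for the symplectic group over $\F_2$ with the expression for $O(2n+1)$ furnished by Lemma~\ref{cardof2}.

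First I would compute $|\textrm{Sym}(2n,\F_2)|$ from the action on symplectic bases. Fix the standard symplectic basis $e_1,f_1,\dots,e_n,f_n$ of $\F_2^{2n}$ with respect to the alternating form $\lan x,y\ran=x\trans J_{2n}y$. Since $T\trans J_{2n}T=J_{2n}$ forces $\lan Tx,Ty\ran=\lan x,y\ran$, an element $T\in\textrm{Sym}(2n,\F_2)$ is precisely a linear map sending this ordered basis to another ordered symplectic basis $e_1',f_1',\dots,e_n',f_n'$, so $|\textrm{Sym}(2n,\F_2)|$ equals the number of such bases. Counting recursively: $e_1'$ may be any nonzero vector, giving $2^{2n}-1$ choices; since the form is nondegenerate, $v\mapsto\lan e_1',v\ran$ is a nonzero linear functional, so the condition $\lan e_1',f_1'\ran=1$ leaves exactly $2^{2n-1}$ choices for $f_1'$; the remaining vectors must lie in the perpendicular space $\{v:\lan e_1',v\ran=\lan f_1',v\ran=0\}$, a nondegenerate alternating space of dimension $2n-2$ complementary to $\mathrm{span}(e_1',f_1')$, and one continues by induction. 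Using $\sum_{i=1}^n(2i-1)=n^2$ this gives
\begin{equation}
|\textrm{Sym}(2n,\F_2)|=\prod_{i=1}^n\bigl(2^{2i}-1\bigr)\,2^{2i-1}=2^{\,n^2}\prod_{i=1}^n\bigl(2^{2i}-1\bigr).
\end{equation}

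Next I would evaluate $O(2n+1)$ via Lemma~\ref{cardof2}. Since $\lfloor\frac{(2n+1)-1}{2}\rfloor=n$ and $\frac{(2n+1)(2n)}{2}=n(2n+1)$, the lemma gives $O(2n+1)=2^{\,n(2n+1)}\prod_{i=1}^n\bigl(1-2^{-2i}\bigr)$. Writing $1-2^{-2i}=(2^{2i}-1)2^{-2i}$ and using $\sum_{i=1}^n 2i=n(n+1)$, this equals $2^{\,n(2n+1)-n(n+1)}\prod_{i=1}^n(2^{2i}-1)=2^{\,n^2}\prod_{i=1}^n(2^{2i}-1)$, which coincides with the previous display; hence $|\textrm{Sym}(2n,\F_2)|=O(2n+1)$.

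The argument is short — counting plus exponent bookkeeping — so I do not foresee a real obstacle; the only point needing care is the inductive step in the basis count, namely that over $\F_2$ the equation $\lan e_1',f_1'\ran=1$ really cuts out $2^{2n-1}$ vectors (which follows from nondegeneracy of the form) and that the associated perpendicular space is again a nondegenerate alternating space of dimension $2n-2$ splitting off the hyperbolic pair $e_1',f_1'$, so that the recursion is valid. As a more structural alternative one may invoke the exceptional isomorphism $\textrm{Sym}(2n,\F_2)\cong\rO(2n+1,\F_2)$: in characteristic $2$ a nondegenerate quadratic form in $2n+1$ variables has a one-dimensional radical for its polar bilinear form, and the induced map from $\rO(2n+1,\F_2)$ to the symplectic group of the $2n$-dimensional quotient by this radical is an isomorphism onto $\textrm{Sym}(2n,\F_2)$; this also yields the equality of cardinalities, but it requires developing the theory of quadratic forms in characteristic $2$, which is why I would prefer the self-contained count above.
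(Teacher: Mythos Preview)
Your argument is correct. The paper does not actually prove this lemma: it merely points to Carter~\cite{Car85}, where the statement is obtained via the exceptional isomorphism $\textrm{Sym}(2n,\F_2)\cong\rO(2n+1,\F_2)$ in characteristic~$2$ --- precisely the ``structural alternative'' you sketch at the end. Your main argument instead gives a self-contained count of ordered symplectic bases, arriving at $|\textrm{Sym}(2n,\F_2)|=2^{n^2}\prod_{i=1}^n(2^{2i}-1)$, and then checks directly that the formula of Lemma~\ref{cardof2} gives the same value for $O(2n+1)$. Both routes are standard; yours has the virtue of requiring nothing beyond elementary linear algebra over $\F_2$, while the isomorphism in Carter explains \emph{why} the two orders agree rather than merely verifying the numerical coincidence. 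The one delicate step you flag --- that the orthogonal complement of the hyperbolic pair $e_1',f_1'$ is again a nondegenerate alternating space of dimension $2n-2$ --- is indeed the only thing to check, and it follows from the nondegeneracy of the hyperbolic plane $\mathrm{span}(e_1',f_1')$.
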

 See \cite[p'6, p'11]{Car85}.
 The following result is probably well known, and we bring its proof for completeness.
 \begin{lemma}\label{nkmatrrankk} The number of
 $n\times k$ matrices $X\in\F_2^{n\times k}$ of rank $k\le n$ is equal to
 \begin{equation}\label{nkmatrrankk1}
 N(n,k)=(2^n-1)(2^n-2)\ldots(2^n-2^{k-1})=2^{nk}(1-\frac{1}{2^n})
 \ldots (1-\frac{1}{2^{n-k+1}})
 \end{equation}
 \end{lemma}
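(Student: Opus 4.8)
The plan is to count the $n\times k$ matrices of rank $k$ over $\FF_2$ by building them up one column at a time, choosing the columns so that at each stage the new column lies outside the span of the previously chosen ones. First I would set up the induction on the number of columns: a single nonzero vector in $\FF_2^n$ can be chosen in $2^n-1$ ways, which accounts for the first factor. Having chosen $j$ columns that are linearly independent, their span is a $j$-dimensional subspace of $\FF_2^n$ and hence contains exactly $2^j$ vectors; the $(j+1)$-st column must avoid this subspace, so it can be chosen in $2^n-2^j$ ways, and any such choice keeps the first $j+1$ columns independent. Multiplying these counts for $j=0,1,\ldots,k-1$ gives
\[
N(n,k)=(2^n-1)(2^n-2^1)\cdots(2^n-2^{k-1}).
\]

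To get the second expression in \eqref{nkmatrrankk1}, I would factor $2^i$ out of the $i$-th factor $2^n-2^i$ for $i=0,1,\ldots,k-1$. The product of the extracted powers is $2^{0+1+\cdots+(k-1)}=2^{\binom{k}{2}}$, and the remaining factors are $(1-2^{-n})(1-2^{-(n-1)})\cdots(1-2^{-(n-k+1)})$. Hence the total is $2^{\binom{k}{2}}\cdot 2^{n(k)-\binom{k}{2}}$? — no, more carefully: the $i$-th remaining factor is $1-2^{i-n}$, and since $i$ runs over $0,\ldots,k-1$ the exponent $i-n$ runs over $-n,-(n-1),\ldots,-(n-k+1)$; and $2^{\binom{k}{2}}$ times the full product of the $2^n$'s absorbed would need checking. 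The clean way: rewrite each factor as $2^n-2^i = 2^n(1-2^{i-n})$, giving $N(n,k)=2^{nk}\prod_{i=0}^{k-1}(1-2^{-(n-i)}) = 2^{nk}(1-2^{-n})(1-2^{-(n-1)})\cdots(1-2^{-(n-k+1)})$, which is exactly the stated formula.

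Since the counting argument is elementary, there is no real obstacle here; the only point requiring a line of care is verifying that any vector outside the span of the current $j$ independent columns genuinely extends them to $j+1$ independent columns — this is immediate from the definition of linear independence over any field, in particular $\FF_2$ — and keeping track of the index shift ($i$ from $0$ to $k-1$) when converting the product $\prod(2^n-2^i)$ into the normalized form $2^{nk}\prod(1-2^{-(n-i)})$.
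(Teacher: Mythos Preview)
Your argument is correct and is essentially identical to the paper's own proof: both build the matrix column by column, noting that after $j$ independent columns the span has $2^j$ vectors so the next column can be chosen in $2^n-2^j$ ways, and then multiply. You additionally spell out the passage to the normalized form $2^{nk}\prod(1-2^{-(n-i)})$, which the paper simply states; that derivation is fine once you settle on factoring $2^n$ (not $2^i$) from each term, as you ultimately do.
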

 \proof  The proof is by induction on $k$.  For $k=1$, $A$ can have any first column,
 except the zero column.  Hence $N(n,1)=2^n-1$.  Assume that the number of
 $n\times k$ matrices $X\in\F_2^{n\times k}$ of rank $k\le n$ is equal to $N(n,k)$ for $k\le n-1$.
 Observe that $A\in\F_2^{n\times(k+1)}$ has rank $k+1$ if and only if the first $k$ columns of $A$ are linearly
 independent, and the last column is not a linear combination of the first $k$ columns.  Assume that the first $k$ columns  of $A$ are linearly independent.
 Then the number of vectors, which are linear combinations of the first $k$ columns of $A$ are $2^k$.  Hence the last column of $A$ can be chosen in $2^n-2^k$ ways such that rank $A=k+1$. Thus $N(n,k+1)=N(n,k)(2^n-2^k)$.  \qed
 \begin{lemma}\label{prodest} For $n\ge 2$
 $$1>\prod_{j=1}^{n-1}(1-\frac{1}{2^j})>\prod_{j=1}^{\infty}(1-\frac{1}{2^j})>\frac{1}{4}.$$
 \end{lemma}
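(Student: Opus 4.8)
The plan is to establish the three inequalities in the chain separately, all of them being elementary estimates on a partial (or full) product of terms of the form $(1 - 2^{-j})$.

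For the leftmost inequality $1 > \prod_{j=1}^{n-1}(1-2^{-j})$, I would simply note that each factor $1 - 2^{-j}$ lies strictly in $(0,1)$ for $j \ge 1$, so any product of at least one such factor is strictly less than $1$; since $n \ge 2$ the product is nonempty. For the middle inequality $\prod_{j=1}^{n-1}(1-2^{-j}) > \prod_{j=1}^{\infty}(1-2^{-j})$, I would observe that passing from the finite product with $n-1$ factors to the infinite product multiplies by the additional factors $(1-2^{-j})$ for $j \ge n$, each strictly less than $1$ and positive, so the infinite product (which converges, since $\sum 2^{-j} < \infty$) is strictly smaller; one should remark that the infinite product is positive, hence nonzero, so the strict inequality is genuine.

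The substantive step is the rightmost inequality $\prod_{j=1}^{\infty}(1-2^{-j}) > \tfrac14$. Here I would take logarithms and bound $-\ln(1-x)$ from above. Using the expansion $-\ln(1-x) = \sum_{m\ge 1} x^m/m \le \sum_{m\ge 1} x^m = x/(1-x)$ valid for $0 \le x < 1$, with $x = 2^{-j}$ we get $-\ln(1-2^{-j}) \le 2^{-j}/(1-2^{-j}) = 1/(2^j - 1)$. Summing, $-\ln \prod_{j=1}^\infty (1-2^{-j}) \le \sum_{j=1}^\infty \frac{1}{2^j-1} = 1 + \tfrac13 + \tfrac17 + \tfrac{1}{15} + \cdots$. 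The first two terms already give $4/3$, and the tail $\sum_{j\ge 3}\frac{1}{2^j-1}$ is easily bounded (e.g.\ $\frac{1}{2^j-1} \le \frac{1}{2^{j-1}}$ for $j \ge 1$, giving tail $\le \sum_{j\ge 3} 2^{-(j-1)} = \tfrac12$) to conclude the sum is below $\ln 4 = 2\ln 2 \approx 1.386$. Care is needed because $4/3 \approx 1.333$ and $\ln 4 \approx 1.386$ are close, so the crude tail bound $\tfrac12$ overshoots; instead I would compute a few more exact terms ($1 + \tfrac13 + \tfrac17 + \tfrac{1}{15} = \tfrac{415}{315} \approx 1.317$) and bound only the remaining tail $\sum_{j \ge 5}\frac{1}{2^j-1} \le \sum_{j\ge 5} 2^{-(j-1)} = \tfrac{1}{8}$, yielding a total at most $\approx 1.317 + 0.125 = 1.442$, which unfortunately still exceeds $\ln 4$.

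So the genuine obstacle is getting a tight enough bound, and the fix is to use a slightly sharper estimate for $-\ln(1-x)$, namely $-\ln(1-x) \le x + \frac{x^2}{2(1-x)}$ (from $\sum_{m \ge 2} x^m/m \le \tfrac12\sum_{m\ge 2}x^m$), applied for $j \ge 2$, together with exact values of the first couple of factors. Concretely: $\prod_{j=1}^\infty(1-2^{-j}) = \tfrac12 \cdot \tfrac34 \cdot \prod_{j\ge 3}(1-2^{-j}) = \tfrac38 \prod_{j \ge 3}(1-2^{-j})$, and for $j \ge 3$ one has $-\ln(1-2^{-j}) \le 2^{-j} + \tfrac{2^{-2j}}{2(1-2^{-j})} \le 2^{-j} + 2^{-2j}$, so $-\ln\prod_{j\ge 3}(1-2^{-j}) \le \sum_{j\ge 3}(2^{-j} + 2^{-2j}) = \tfrac14 + \tfrac{1}{48} = \tfrac{13}{48}$, whence $\prod_{j\ge 3}(1-2^{-j}) \ge e^{-13/48} > e^{-0.271} > 0.762 > \tfrac23$. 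Therefore $\prod_{j=1}^\infty(1-2^{-j}) > \tfrac38 \cdot \tfrac23 = \tfrac14$, as required. Alternatively one could cite the Euler pentagonal number theorem to get the numerical value $\approx 0.2887898$ directly, but the self-contained estimate above is preferable here.
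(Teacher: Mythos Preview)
Your argument is correct: the first two inequalities are handled exactly as they should be, and your final estimate for the infinite product---pulling out the factors $\tfrac12\cdot\tfrac34=\tfrac38$ and bounding $-\ln\prod_{j\ge 3}(1-2^{-j})\le \sum_{j\ge 3}(2^{-j}+2^{-2j})=\tfrac{13}{48}<\ln\tfrac32$---does the job, since then $\prod_{j\ge 3}(1-2^{-j})>\tfrac23$ and the product exceeds $\tfrac38\cdot\tfrac23=\tfrac14$. (The two false starts you record along the way should of course be excised in a final write-up.)

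The paper's proof is shorter and worth knowing. Instead of summing over $j$ first, it expands $\log(1-2^{-j})$ as a power series in $2^{-j}$ and \emph{swaps the order of summation}:
\[
\sum_{j\ge 1}\log(1-2^{-j})=-\sum_{m\ge 1}\frac{1}{m}\sum_{j\ge 1}2^{-jm}=-\sum_{m\ge 1}\frac{1}{m(2^m-1)}.
\]
Now the single inequality $\frac{1}{2^m-1}\le \frac{2}{2^m}$ (with equality only at $m=1$) gives
\[
-\sum_{m\ge 1}\frac{1}{m(2^m-1)}>-2\sum_{m\ge 1}\frac{1}{m\,2^m}=2\log\tfrac12=\log\tfrac14,
\]
which is exactly the desired bound. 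The trick of exchanging the two summations lets the geometric series in $j$ collapse cleanly and turns the remaining sum back into a logarithm, avoiding all the numerical juggling your approach requires. Your method, on the other hand, is entirely elementary and would adapt more readily if one needed a bound other than $\tfrac14$.
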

 \proof
 Recall that
 $\log(1-x)=-\sum_{m=1}^{\infty} \frac{x^m}{m} \textrm{ for } x\in (-1,1)$.
 \begin{eqnarray*}
 \sum_{j=1}^{n-1}\log(1-\frac{1}{2^j})> \sum_{j=1}^{\infty}
 \log(1-\frac{1}{2^j})=-\sum_{j=1}^{\infty}\sum_{m=1}^{\infty}\frac{1}{m}\frac{1}{2^{jm}}=
 -\sum_{m=1}^{\infty}\frac{1}{m}\sum
 _{j=1}^{\infty}\frac{1}{2^{jm}}=\\-\sum_{m=1}^{\infty}\frac{1}{m}\frac{1}{2^m(1-2^{-m})}>
 -\sum_{m=1}^{\infty}\frac{1}{m} \frac{2}{2^m}=2\log(1-\frac{1}{2})=\log \frac{1}{4}.
 \end{eqnarray*}
 \qed

 Combine Lemmas \ref{cardof2}, \ref{nkmatrrankk} and \ref{prodest} to deduce
 \begin{equation}\label{nknonest}
 1\ge C(n)> \frac{1}{4}, \quad 2^{nk}> N(n,k)>2^{nk-2}.
 \end{equation}
 \begin{lemma}\label{estsymrkmat}  Let $\theta(n,k)$ be the number of $A\in\rS(n,\F_2)$ of rank $k$.
 If $k$ is odd then $\theta(n,k)=\frac{N(n,k)}{O(k)}$.  If $k$ even then then
 $\theta(n,k)=\frac{N(n,k)}{O(k)}+\frac{N(n,k)}{O(k+1)}$.  In particular
 \begin{equation}\label{thetbounds}
 2^{(n-\frac{k-1}{2})k-2}<\theta(n,k)<2^{(n-\frac{k-1}{2})k+3}.
 \end{equation}
 \end{lemma}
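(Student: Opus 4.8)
The plan is to combine the canonical form of Lemma~\ref{canformsymf2} with a stabilizer count. Fix $k$ with $1\le k\le n$. By Lemma~\ref{canformsymf2}, every $A\in\rS(n,\F_2)$ with $\rank A=k$ is of the form $A=XBX\trans$ for some $X\in\F_2^{n\times k}$ of rank $k$, where $B=I_k$ if $k$ is odd and $B$ is one of $I_k$ or $J_k:=\oplus_{j=1}^{l}H_2$ if $k=2l$ is even; moreover, when $k$ is even the two resulting families of matrices are disjoint. Conversely, since a full-column-rank $X\in\F_2^{n\times k}$ is injective, $\rank(XBX\trans)=\rank B=k$ for either admissible $B$. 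Hence it suffices, for each admissible $B$, to determine the common number of full-column-rank preimages $X$ of a fixed $A=XBX\trans$, to divide $N(n,k)$ from Lemma~\ref{nkmatrrankk} by that number, and to add the two contributions when $k$ is even.

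The key step is this preimage count. Suppose $X,X'\in\F_2^{n\times k}$ both have rank $k$ and $XBX\trans=X'BX'\trans=A$. Since $\rank A=\rank X=k$, we have $\mathrm{col}(A)=\mathrm{col}(X)=\mathrm{col}(X')$, so $X'=XP$ for a unique $P\in\F_2^{k\times k}$, and $P$ is invertible because $X$ is injective. Choosing a left inverse $L$ of $X$ (so $LX=I_k$, hence also $X\trans L\trans=I_k$) and multiplying $XPBP\trans X\trans=XBX\trans$ on the left by $L$ and on the right by $L\trans$, we obtain $PBP\trans=B$. Conversely every $P$ with $PBP\trans=B$ gives $XPBP\trans X\trans=A$, and distinct $P$ give distinct $XP$ since $X$ is left-cancellable. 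Thus the full-column-rank preimages of $A$ form the single coset $X\cdot\mathrm{G}_B$ with $\mathrm{G}_B:=\{P\in\F_2^{k\times k}:PBP\trans=B\}$, so their number is $|\mathrm{G}_B|$, independent of $A$. For $B=I_k$ this group is $\rO(k,\F_2)$, of order $O(k)$ by Lemma~\ref{cardof2}; for $B=J_k=J_{2l}$ the map $P\mapsto P\trans$ is a bijection of $\mathrm{G}_B$ onto $\textrm{Sym}(2l,\F_2)$, so $|\mathrm{G}_B|=O(2l+1)=O(k+1)$ by Lemma~\ref{cardsym2nf2}. This gives $\theta(n,k)=N(n,k)/O(k)$ for odd $k$ and $\theta(n,k)=N(n,k)/O(k)+N(n,k)/O(k+1)$ for even $k$.

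The bounds \eqref{thetbounds} then follow from the estimates collected in \eqref{nknonest}. Writing $O(j)=C(j)2^{j(j-1)/2}$ with $\tfrac14<C(j)\le 1$ and using $2^{nk-2}<N(n,k)<2^{nk}$, one gets $2^{(n-\frac{k-1}{2})k-2}<N(n,k)/O(k)<2^{(n-\frac{k-1}{2})k+2}$ and, in the same way, $N(n,k)/O(k+1)<4\cdot 2^{nk}/2^{k(k+1)/2}<2^{(n-\frac{k-1}{2})k+2}$. Adding the at most two terms gives $2^{(n-\frac{k-1}{2})k-2}<\theta(n,k)<2\cdot 2^{(n-\frac{k-1}{2})k+2}=2^{(n-\frac{k-1}{2})k+3}$, which is \eqref{thetbounds}.

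The main obstacle --- really the only point beyond bookkeeping --- is the preimage count: one must verify that the full-column-rank solutions $X$ of $XBX\trans=A$ form exactly one coset of $\mathrm{G}_B$. This is where the hypothesis $\rank A=k$ (rather than merely $\rank X=k$) is used, to force $\mathrm{col}(A)=\mathrm{col}(X)$; and it is where, over $\F_2$, one cannot invoke invertibility of $X\trans X$ and must instead work with a one-sided inverse of $X$. The rest is a direct application of Lemmas~\ref{cardof2}, \ref{cardsym2nf2}, \ref{nkmatrrankk} and \ref{prodest} (via \eqref{nknonest}).
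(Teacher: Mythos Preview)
Your proof is correct and follows essentially the same approach as the paper: classify rank-$k$ symmetric matrices via Lemma~\ref{canformsymf2}, count the full-column-rank representations $X$ of a given $A=XBX\trans$ by showing (via a one-sided inverse of $X$) that they form a single coset of the stabilizer $\{P:PBP\trans=B\}$, identify this stabilizer as $\rO(k,\F_2)$ or $\mathrm{Sym}(k,\F_2)$, and then read off the bounds from \eqref{nknonest}. Your write-up is in fact slightly more explicit than the paper's (you spell out the converse direction of the coset claim and the transpose bijection onto the symplectic group), but the argument is the same.
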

 \proof  We first find the number of distinct matrices $A\in \rS_n(\F_2)$ of rank $k$ of
 the form $XX\trans$, where $X\in \F^{n\times k}$ of rank $k$.  Assume that $X,Y\in \F_2^{n\times k}$
 have both rank $k$ and $A=XX\trans=YY\trans$.  Since the columns of $X$ and $Y$ form a basis of
 the column space of $A$,
 it follows that $Y=XQ$ for some $Q\in\gl(k,\F_2)$.  We claim that $Q\in\rO(k,\F_2)$.  Indeed, since the
 columns of $X$ are linearly independent, they can be extended to a basis of $\F_2^n$.
 Hence, there exists $Z\in \F_2^{k\times n}$ such that $ZX=I_k$.  So
 $$I_k=Z(XX\trans)Z\trans=Z(YY\trans)Z\trans =QQ\trans.$$
 Hence the number of such symmetric matrices of rank $k$ is $\frac{N(n,k)}{O(k)}$.
 Use Lemmas  \ref{cardof2}, (\ref{numorthmatrices}) and (\ref{nknonest}) to conclude that
 $$ 2^{(n-\frac{k-1}{2})k-2}<\frac{N(n,k)}{O(k)}<2^{(n-\frac{k-1}{2})k+2}.$$
 If $k$ is odd, we are done in view of Lemma \ref{canformsymf2}.
 Suppose that $k$ is even.  Then Lemma \ref{canformsymf2} claims that we have a second kind of $A\in\rS(n,\F_2)$ of rank $k$, which is of the form $XJ_kX\trans$, where $X\in\F_2^{n\times k}$
 is a matrix of rank $k$.  As  in the first case, $A=XJ_kX\trans=YJ_kY\trans$ if and only if $Y=XP$
 where $P$ is a symplectic matrix.  In view of Lemma \ref{cardsym2nf2} the cardinality of the symplectic
 group over $\F_2$ of order $k$ is $O(k+1)$.  Hence the number the symmetric matrices of order $n$,
 rank $k$ of the second kind is $\frac{N(n,k)}{O(k+1)}$, which is less than $\frac{N(n,k)}{O(k)}$.  In particular (\ref{thetbounds}) always holds.
 \qed
 \begin{theorem}\label{avmrnk2}  Let $\alpha_n(\F_2)$ be the scaled $n$-average minimum rank
 over all simple graphs on $n$ vertices over the field $\F_2$, as defined by (\ref{avminrank}).
 Define $\underline{\alpha}(\F_2)\le \overline{\alpha}(\F_2)$ as in (\ref{uplowavmrank}).
 Then $\underline{\alpha}(\F_2)= \overline{\alpha}(\F_2)=1$.
 \end{theorem}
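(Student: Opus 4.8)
The plan is to bound $\alpha_n(\F_2)$ above and below by quantities tending to $1$. The upper bound is immediate: every $A\in\rS(\F_2,G)$ is an $n\times n$ matrix, so $\mr(\F_2,G)\le n$ for all $G$ (in fact $\le n-1$), whence $\alpha_n(\F_2)\le 1$ and $\overline{\alpha}(\F_2)\le 1$. All the work lies in proving $\underline{\alpha}(\F_2)\ge 1$, which I would do by showing that all but a vanishing fraction of the $2^{\binom n2}$ labeled graphs in $\cG_n$ have minimum rank within $o(n)$ of $n$.

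Fix the threshold $k_n:=n-t_n$ with $t_n:=\lceil\sqrt{2n}\,\rceil+1$; any choice with $t_n=o(n)$ and $t_n^2-2n\to\infty$ would serve equally well. First I would set up a counting injection: for each $G$ with $\mr(\F_2,G)\le k_n$ pick a witness $A_G\in\rS(\F_2,G)$ with $\rank A_G\le k_n$; since the off-diagonal support of $A_G$ recovers the edge set of $G$, the assignment $G\mapsto A_G$ is injective, so
\[
\#\{G\in\cG_n:\mr(\F_2,G)\le k_n\}\ \le\ \#\{A\in\rS(n,\F_2):\rank A\le k_n\}\ =\ \sum_{j=0}^{k_n}\theta(n,j).
\]
Next I would estimate the right side with the bound $\theta(n,j)<2^{(n-\frac{j-1}{2})j+3}$ from Lemma~\ref{estsymrkmat}. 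Writing $g(j):=nj-\binom j2+3$ for the exponent, one checks $g(j+1)-g(j)=n-j>0$ on $\{0,\dots,n-1\}$, so the terms are increasing and $\sum_{j=0}^{k_n}\theta(n,j)<(k_n+1)2^{g(k_n)}$. The algebraic identity $nk-\binom k2=\binom{n+1}2-\binom{n-k+1}2$ gives $g(k_n)=\binom n2+n-\binom{t_n+1}2+3$, hence $\sum_{j=0}^{k_n}\theta(n,j)<2^{\binom n2}\beta_n$ with $\beta_n:=(k_n+1)2^{\,n-\binom{t_n+1}2+3}$; since $t_n\ge\sqrt{2n}+1$ one has $\binom{t_n+1}2\ge n+\tfrac32\sqrt{2n}+1$, so $\beta_n\le 4(n+1)\,2^{-\frac32\sqrt{2n}}\to 0$.

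Finally I would assemble these estimates: at least $(1-\beta_n)2^{\binom n2}$ graphs satisfy $\mr(\F_2,G)>k_n$, so
\[
\alpha_n(\F_2)=\frac{1}{n2^{\binom n2}}\sum_{G\in\cG_n}\mr(\F_2,G)\ \ge\ \frac{k_n(1-\beta_n)2^{\binom n2}}{n\,2^{\binom n2}}\ =\ \Bigl(1-\tfrac{t_n}{n}\Bigr)(1-\beta_n)\ \longrightarrow\ 1,
\]
giving $\underline{\alpha}(\F_2)\ge1$; combined with $\overline{\alpha}(\F_2)\le1$ this yields $\underline{\alpha}(\F_2)=\overline{\alpha}(\F_2)=1$. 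The one genuinely substantive point is the exponent bookkeeping in the middle step: verifying that $g(j)$ increases across the entire range $\{0,\dots,k_n\}$ so the sum over ranks is controlled by its top term, and that lowering the threshold by $t_n\approx\sqrt{2n}$ is exactly enough to drive the surplus exponent $n-\binom{t_n+1}2$ to $-\infty$. Everything else — the injection, the trivial upper bound, and the final averaging — is routine.
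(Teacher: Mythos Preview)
Your proof is correct and follows essentially the same approach as the paper: bound the number of graphs with $\mr(\F_2,G)\le k$ by $\sum_{j\le k}\theta(n,j)$ via the injection $G\mapsto A_G$, control this sum using the upper bound of Lemma~\ref{estsymrkmat}, and conclude that almost all graphs have minimum rank close to $n$. The only cosmetic difference is that the paper fixes $t\in(0,1)$ and shows $\underline{\alpha}(\F_2)\ge t$ for every such $t$, whereas you pick a single explicit threshold $k_n=n-\Theta(\sqrt{n})$ and obtain $\alpha_n(\F_2)\to 1$ directly; your version is slightly more quantitative but the underlying argument is the same.
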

 \proof
 Let us estimate the number of all graphs whose minimum rank
 is at most $k$.  This number is at most the number of symmetric matrices
 in $\rS_n(\F_2)$ whose rank is at most $k$.
 (In other words, we assume the optimal condition that for each
 graph of minimum rank $r\le k$ there is only one matrix of
 rank $r$ and all other matrices are of rank greater than $r$.)
 Lemma \ref{estsymrkmat} yields that
 the upper bound on this number is
 $$\sum_{r=1}^k 8\cdot 2^{(n-\frac{r-1}{2})r}< 8\cdot 2^{(n-\frac{k-1}{2})k}
 \sum_{j=0}^{\infty} \frac{1}{2^j}=16\cdot 2^{(n-\frac{k-1}{2})k}.$$

 The number of graphs is $2^{\frac{n(n-1)}{2}}$.
 Fix $t \in (0,1)$.  Suppose that $k\le nt$.
 Then the number of all graphs with rank at most $tn$ is less than
 $16\cdot 2^{\frac{(2-t)t n^2}{2}}$.  Note that
 $$\lim_{n\to\infty}\frac{(tn) 16\cdot 2^{\frac{(2-t)t
 n^2}{2}}}{2^{\frac{n(n-1)}{2}}}=0.$$
 So the contribution of all these graphs to the average is
 zero.  Thus all the contribution to $\underline{\alpha}(\F_2)$
 comes from the graphs whose minimum rank is greater then $tn$
 for any $t\in (0,1)$.  Hence, $\underline{\alpha}(\F_2)\ge t$.
 Thus we showed (\ref{valminavrankF2}).  \qed

 \section{Infinite fields }\label{infield}
 The question raised in the introduction has an affirmative answer in case $\F$ is an infinite
 field.  This appears implicitly in \cite{JLS}.
 For the sake of clarity we state this result and give a short sketch of its proof.
 \begin{ThA}
 Let $n$ and $k$ be positive integers such that $4\le k<n$, and let $G$ be a graph on $n$ vertices which contains a $k$-clique as an induced subgraph. Then, for any infinite field $F$
 \[
 \mr(\FF,G)\le n-k+1.
 \]
 \end{ThA}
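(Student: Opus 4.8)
The plan is to produce an explicit symmetric matrix $A\in\rS(\FF,G)$ of rank at most $n-k+1$ by exploiting the $k$-clique and the freedom to choose nonzero off-diagonal entries and arbitrary diagonal entries over an infinite field. Relabel $V=\an n$ so that the $k$-clique occupies the vertices $\{n-k+1,\ldots,n\}$, and partition $A$ in block form according to $\an n=S\cup C$ with $S=\{1,\ldots,n-k\}$ and $C=\{n-k+1,\ldots,n\}$:
\[
A=\begin{bmatrix} A_{SS} & A_{SC}\\ A_{SC}\trans & A_{CC}\end{bmatrix},
\]
where $A_{CC}$ is a $k\times k$ symmetric matrix, all of whose off-diagonal entries are forced to be nonzero because $C$ induces a clique. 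The idea is to choose $A_{CC}$ to have rank $1$, i.e.\ $A_{CC}=\x\x\trans$ for a vector $\x\in F^k$ with all coordinates nonzero; over an infinite field this is possible and automatically makes every off-diagonal entry of $A_{CC}$ nonzero. Then the last $k$ columns of $A$ all lie in the span of the single column $\begin{bmatrix} A_{SC}\,\mathbf e/ x_? \\ \x\end{bmatrix}$-type vector only if the $A_{SC}$ block is itself rank $1$ and aligned with $\x$; in general $A_{SC}$ is constrained by the adjacency pattern between $S$ and $C$, so we cannot force it to be rank $1$. Instead, the correct bookkeeping is: the block $\begin{bmatrix} A_{SC} & A_{CC}\end{bmatrix}$ has rank at most $1+(n-k)$ trivially, but we want the whole matrix to have rank at most $n-k+1$, so we need the first $n-k$ rows together with \emph{one} of the last $k$ rows to span the row space.

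Concretely, I would fix the entries so that each of rows $n-k+1,\ldots,n$ (the clique rows) is a scalar multiple of a \emph{common} vector plus a correction supported on coordinates in $S$, and arrange that all these rows together span a space of dimension at most $1$ relative to the first $n-k$ coordinate directions. The cleanest route: choose $A_{CC}=\x\x\trans$; choose the $i$-th row of $A_{SC}\trans$ (for $i\in C$) to be $x_i\,\y\trans$ for a single vector $\y\in F^{n-k}$ — this is legitimate precisely because we may take $\y$ to have a nonzero entry in position $j$ exactly when $j\in S$ is adjacent to \emph{some} fixed clique vertex, but adjacency of $j$ to different clique vertices may differ, which is the obstruction. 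To handle that, one allows $\y$ to depend on $i$ but keeps the \emph{rank} of the augmented block under control: set the row of $A$ indexed by $i\in C$ equal to $x_i\bigl[\,\z\trans \mid \x\trans\,\bigr] + \bigl[\,\mathbf r_i\trans \mid \mathbf 0\,\bigr]$ where $\mathbf r_i$ is supported only on $S$-coordinates, and then the submatrix formed by the $S$-rows can be chosen (using the infinitude of $F$ and the fact that those rows have no clique-induced constraints) to have full row rank $n-k$ and to contain all the $\mathbf r_i$ in its row space; the remaining one dimension comes from $\bigl[\z\trans\mid\x\trans\bigr]$. This gives $\rank A\le (n-k)+1$.

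The main obstacle — and the step deserving the most care — is verifying simultaneously that (a) the off-diagonal entries land exactly on the prescribed nonzero/zero pattern dictated by $E$, in particular that the forced-nonzero positions in $A_{SC}$ (edges between $S$ and $C$) and in $A_{SS}$ can all be made nonzero while the forced-zero positions stay zero, and (b) the rank bound is not spoiled. Because $F$ is infinite, a genericity/Zariski-density argument resolves this: the set of choices of free parameters making a particular forced-nonzero entry vanish is a proper subvariety, and a finite union of proper subvarieties cannot cover $F^N$ over an infinite field; meanwhile the rank-$\le n-k+1$ condition is closed and we have exhibited one point satisfying it with a combinatorial description, so a generic perturbation within the rank-$(n-k+1)$ locus keeps the rank bound and achieves the pattern. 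I would therefore structure the proof as: (1) set up the block decomposition and the rank-one choice of $A_{CC}$; (2) write the clique rows in the displayed ``$x_i\cdot(\text{common vector}) + (\text{$S$-supported correction})$'' form and check this forces $\rank A\le n-k+1$; (3) invoke the infinite-field genericity argument to realize the exact graph pattern; and reference \cite{JLS} for the original (implicit) appearance of this construction.
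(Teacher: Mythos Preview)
Your argument has a genuine gap at the rank-bound step. You write the clique rows as $x_i\bigl[\z\trans\mid\x\trans\bigr]+[\mathbf r_i\trans\mid\mathbf 0]$ and then assert that choosing $A_{SS}$ of full rank $n-k$ makes the $\mathbf r_i$ lie ``in its row space,'' so that the total row space has dimension at most $(n-k)+1$. But what you actually need is that the vectors $[\mathbf r_i\trans\mid\mathbf 0]\in\FF^n$ lie in the row space of the $S$-rows $[A_{SS}\mid A_{SC}]$, not merely that $\mathbf r_i\in\FF^{n-k}$ lies in the row space of $A_{SS}$. If $A_{SS}$ is invertible, the unique combination of $S$-rows whose $S$-part equals $\mathbf r_i$ has $C$-part $\mathbf r_i\trans A_{SS}^{-1}A_{SC}$, and you give no reason why this vanishes for every $i\in C$. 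These are $k$ vector conditions coupling $A_{SS}$, $A_{SC}$, $\z$, $\x$ and the $\mathbf r_i$ (which are themselves partially forced by the edge/non-edge pattern between $S$ and $C$), and nothing in your genericity paragraph addresses them: you have not exhibited a single matrix of rank $\le n-k+1$ with the correct zero pattern on the off-diagonal, so there is no ``one point'' to perturb from.

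The paper's proof avoids this entirely by splitting \emph{asymmetrically}: it places only $k-1$ of the clique vertices in the small block and sends the remaining clique vertex $k$ into the large block of size $n-k+1$. One then makes the large block $A_{22}$ invertible (easy over an infinite field, using the free diagonal) and sets $A_{11}=A_{12}A_{22}^{-1}A_{12}\trans$, so the Schur complement is zero and $\rank A=n-k+1$ automatically. The only remaining check is that every off-diagonal entry of $A_{11}$ is nonzero, and here the extra clique vertex pays off: each of $1,\dots,k-1$ is adjacent to $k$, so every row of $A_{12}$ has a nonzero entry in the column indexed by $k$, giving the length-two paths needed in \cite[Lemma~A.2]{JLS} to make $(A_{12}A_{22}^{-1}A_{12}\trans)_{ij}$ generically nonzero. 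Your symmetric split (all $k$ clique vertices together, $A_{CC}$ of rank one) loses exactly this leverage.
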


 \begin{proof}
 We use the first part in the appendix of \cite{JLS}, and in particular Observation~A.1, Lemma~A.2 and the discussion between these two results.

 We can assume that $1,2,\dots,k$ are vertices of a $k$-clique in $G$. Given any $i,j$ in
 $\{1,2,\dots,k-1\}$ with $i\ne j$, there is a path of length two in $G$ from $i$ to $j$, namely
 the path whose only intermediate vertex is $k$. The conditions of Lemma~A.2 in \cite{JLS} are satisfied.
 Hence there exists a matrix $A\in \rS(\FF,G)$ of the form
 \[
 A=
 \begin{bmatrix}
 C & A_{12}\\
 A\trans_{12} & A_{22}
 \end{bmatrix},
 \]
 where $A_{22}$ is an invertible $n-k+1\times n-k+1$ matrix, and $C=A_{12}A^{-1}_{22}A\trans_{12}$.
 Then $\rank A=n-k+1$, implying $\mr(\FF,G)\le n-k+1$.
 \end{proof}

 In light of Theorem~A, we can assume from now on that $F$ is {\em  a finite field}.

 \section{The case $k=n-2$.}
 In \cite[Proposition~4.2.3]{B} the following result has been proved.

 \begin{prop}\label{P2.1}
 Let $G$ be a graph on $n\ge 4$ vertices and suppose that $G$ contains $K_{n-2}$ as an induced subgraph.
 Then $\mr(\FF,G)\le 3$ for any field $\FF$ with $|\FF|>3$.
 \end{prop}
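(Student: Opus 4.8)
The plan is to produce, for every such $G$ and every $\FF$ with $|\FF|>3$, an explicit $A\in\rS(\FF,G)$ with $\rank A\le 3$, in a form that forces the rank bound. Write $u:=n-1$, $v:=n$ for the two non-clique vertices, and partition the clique vertices $1,\dots,n-2$ into at most four classes according to adjacency with the pair $\{u,v\}$: adjacent to both, to $u$ only, to $v$ only, or to neither. Vertices of one class are mutually adjacent and have the same neighbours in $\{u,v\}$, so it suffices to pick a single vector $\beta\in\FF^{3}$ per nonempty class, plus $\delta_u,\delta_v\in\FF^{3}$ for $u,v$, and to put $A=B\trans B$, where $B\in\FF^{3\times n}$ has as its columns these vectors (each clique vertex carrying its class vector). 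Then $\rank A\le 3$ automatically, and ``$A\in\rS(\FF,G)$'' becomes a bounded list of conditions $\langle x,y\rangle\ne 0$ (for edges) and $\langle x,y\rangle=0$ (for non-edges) among the at most six chosen vectors (writing $\langle x,y\rangle$ for $x\trans y$), the diagonal entries of $A$ being free; in particular $n$ disappears from the problem.

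If $u\not\sim v$, then $E(G)$ is covered by the three cliques $\{1,\dots,n-2\}$, $N(u)\cup\{u\}$, $N(v)\cup\{v\}$, and if $u\sim v$ but $N(u)$ and $N(v)$ restricted to the clique are nested, a similar triple of cliques covers $E(G)$. In either case one may take $A=\sum_{i=1}^{3}t_i\,\1_{C_i}\1_{C_i}\trans$ (a sum of three rank-one matrices, $\1_{C}$ the indicator of $C$): non-edges are automatically zero since the $C_i$ are cliques of $G$, and the construction realises $G$ once the scalars $t_i$ avoid a short explicit list of linear relations, which is possible whenever $|\FF|\ge 4$.

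The substantive case is $u\sim v$ with the clique-neighbourhoods of $u$ and $v$ incomparable -- equivalently, all four classes nonempty. Here $E(G)$ admits no cover by three cliques: a clique containing the edge $uv$ lies in $N[u]\cap N[v]$ and so misses the symmetric difference of the two clique-neighbourhoods, whence (a short argument shows) an edge between two vertices of that symmetric difference is covered by neither of the other two cliques. So genuine cancellation is forced, and this is where $|\FF|>3$ is used. I would take $B\trans B$ with the four class vectors $\beta_{\mathrm{both}},\beta_u,\beta_v,\beta_0$ pairwise non-orthogonal and anisotropic (the anisotropy handling classes of size $\ge 2$), let $\delta_u$ be the essentially unique vector orthogonal to both $\beta_v$ and $\beta_0$ and $\delta_v$ the one orthogonal to both $\beta_u$ and $\beta_0$ -- this puts in precisely the required non-adjacencies of $u,v$ to the clique -- and then verify the remaining conditions: that $\delta_u,\delta_v$ have the right (non-zero) products with the other two class vectors, which reduces to the $\beta$'s being in sufficiently general position, and, the one delicate one, $\langle\delta_u,\delta_v\rangle\ne 0$ (forced because $u\sim v$). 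Each of these is the non-vanishing of a low-degree polynomial in the entries of $\beta_{\mathrm{both}},\beta_u,\beta_v,\beta_0$ that is not identically zero.

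The crux, and the main obstacle, is showing this system of non-vanishing conditions is simultaneously solvable over every $\FF$ with $|\FF|>3$. For all but the smallest few fields one can choose the vectors one at a time, each new one needing only to avoid a bounded number of hyperplanes and one quadric, and a Schwartz--Zippel count then does it; the remaining fields, which one checks are only $\FF_4$ and $\FF_5$, are settled by writing down the six vectors explicitly. Over $\FF_3$ the analogous six-vector configuration does not exist -- in agreement with the fact, established later in the paper, that $\mr(\FF_3,G)\le 3$ can fail for such $G$ -- so the hypothesis $|\FF|>3$ is genuinely necessary, and this incomparable case is what the whole argument turns on.
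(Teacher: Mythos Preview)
The paper does not prove Proposition~\ref{P2.1}; it is quoted from Bank's thesis~\cite{B}, so there is no argument in the paper to compare yours against. I can only assess your sketch on its own terms.

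Your overall strategy is sound and can be completed: reduce to choosing at most six column vectors in $\FF^{3}$ subject to a fixed finite list of (non-)orthogonality constraints, and show such a configuration exists whenever $|\FF|>3$. Two things keep what you have written from being a proof. First, a small slip: ``incomparable clique-neighbourhoods of $u$ and $v$'' is \emph{not} equivalent to ``all four classes nonempty''; it only forces the $u$-only and $v$-only classes to be nonempty. This is harmless for the argument (fewer classes means fewer constraints), but the equivalence you assert is false. Second, and this is the real gap, the existence of the six-vector configuration over every $\FF$ with $|\FF|>3$ is asserted rather than shown. You invoke a Schwartz--Zippel count for large fields without stating the degrees or the resulting threshold, and you say $\FF_4$ and $\FF_5$ ``are settled by writing down the six vectors explicitly'' without writing them down. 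Since the whole content of the proposition lives in the small-field cases (for large $|\FF|$ much cruder arguments already work), this is precisely the step that must appear on the page. The configurations do exist---for instance, over any field of characteristic $\ne 2,3$ one may take $\beta_{\mathrm{both}}=(1,1,0)$, $\beta_u=(1,0,1)$, $\beta_v=(0,1,1)$, $\beta_0=(1,1,1)$, giving $\delta_u=(0,1,-1)$, $\delta_v=(1,0,-1)$, and all required inner products are nonzero---but you must exhibit them, do the same for $\FF_4$, and justify that no further fields need separate treatment.
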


 It is known that Proposition~\ref{P2.1} is not always valid when $\FF=\FF_2$.  Examples of such graphs are
 given in~\cite{BGL}, for example graph $\# 14$ there, whose minimum rank over $\FF_2$ is 4.

 \vspace{0.5cm}

 The field $\FF_3$ is not discussed in~\cite{B}. It is our purpose to show that Proposition~\ref{P2.1}
 is not always valid when $\FF=\FF_3$, so in fact, it is best possible.

 \begin{theorem} For every $n$ such that $n\ge 10$
 there exists a graph $G$ on $n$ vertices containing $K_{n-2}$ as an induced subgraph, and such that
 $\mr(\FF_3,G)>3$.
 \end{theorem}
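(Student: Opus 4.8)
The plan is to construct, for each $n\ge 10$, an explicit graph $G$ on $n$ vertices that contains $K_{n-2}$ as an induced subgraph, and to prove that no matrix in $\rS(\FF_3,G)$ has rank $\le 3$. First I would set up the vertex partition: vertices $1,\dots,n-2$ form the clique $K_{n-2}$, and vertices $n-1,n$ are the two extra vertices, each joined to some prescribed subset of the clique vertices (and possibly to each other). The key idea is that a rank-$3$ symmetric matrix $A\in\rS(\FF_3,G)$ would, on the $(n-2)\times(n-2)$ principal submatrix indexed by the clique, restrict to a symmetric matrix $B$ of rank $\le 3$ all of whose off-diagonal entries are nonzero. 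Over a large field one can always find such a $B$ of rank $3$ (indeed rank $1$ suffices: $B=vv\trans$ with all $v_i\ne 0$ works over any field), so the obstruction must come from the interaction with the two extra vertices, whose adjacency pattern forces sign/value constraints that are infeasible over the tiny field $\FF_3=\{0,1,2\}$.

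The core of the argument is a counting/pigeonhole estimate. A rank-$\le 3$ symmetric matrix over $\FF_3$ can, up to congruence, be written as $A = Y D Y\trans$ for a fixed small list of diagonal (or near-diagonal) $D$ of size $\le 3$ — this is the $\FF_3$ analogue of Lemma~\ref{canformsymf2}, and there are only boundedly many such normal forms. Hence $A$ is determined by the $n$ rows $y_1,\dots,y_n\in\FF_3^{\,3}$ (together with the choice of $D$). The condition $a_{ij}\ne 0$ for the clique edges says $y_i D y_j\trans\ne 0$ for all $1\le i<j\le n-2$; the condition $a_{i,n-1}=0$ or $a_{i,n}=0$ for the non-edges to the extra vertices pins each $y_i$ (for $i$ in the appropriate subset of the clique) into the kernel of a fixed linear functional $z\mapsto y_{n-1}Dz\trans$ or $z\mapsto y_{n}Dz\trans$, i.e.\ into a $2$-dimensional subspace of $\FF_3^{\,3}$, which has only $9$ vectors, of which at most $8$ are such that the orthogonality/nonvanishing conditions among them can hold. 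The plan is to choose the adjacency sets of $n-1$ and $n$ so that a large number (at least $9$, which is why $n\ge 10$ enters) of clique vertices $y_i$ are forced into one such $2$-dimensional subspace $W$; within $W$, the bilinear form $y\mapsto yDy\trans$ and $\bigl((y,y')\mapsto yDy'\trans\bigr)$ is a symmetric bilinear form on a rank-$\le 2$ space over $\FF_3$, and one checks by direct inspection of the (few) $2\times 2$ symmetric forms over $\FF_3$ that one cannot have $10$ vectors in $\FF_3^{\,3}$ lying in such a $W$ and pairwise non-orthogonal under $D$. That contradiction shows $\mr(\FF_3,G)\ge 4$.

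Concretely, I would partition the $n-2$ clique vertices into two groups and let $n-1$ be adjacent to exactly the clique vertices in the first group while $n$ is adjacent to exactly those in the second (choosing the adjacency between $n-1$ and $n$, and the sizes of the two groups, to optimize the bound; sizes roughly $\lceil (n-2)/2\rceil$ and $\lfloor(n-2)/2\rfloor$, both $\ge 4$ when $n\ge 10$). Then for any rank-$3$ realization: each $y_i$ with $i$ in the first group satisfies $y_{n-1}Dy_i\trans = 0$; if $y_{n-1}D\ne 0$ this confines these $y_i$ to a plane $W_1$; similarly the second group's $y_i$ lie in a plane $W_2$. Since $W_1\cap W_2$ is at least a line and each plane has only $9$ points, one of the groups (having $\ge 4$ vertices, or, if one combines overlap information, effectively $\ge$ some threshold) is forced to have its $y_i$'s among a set of $\le 8$ usable vectors with pairwise $y_iDy_j\trans\ne 0$; a short finite check over $\FF_3$ rules this out. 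The degenerate cases $y_{n-1}D=0$ or $y_nD=0$ (so that $a_{i,n-1}$ would vanish for \emph{all} $i$, contradicting that $n-1$ has at least one neighbor) are disposed of immediately.

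The main obstacle I anticipate is the finite case analysis that no configuration of many nonzero vectors in $\FF_3^{\,3}$ lying in a common $2$-plane can be pairwise non-orthogonal with respect to the (few) possible symmetric forms $D$ — this is where the precise threshold ``$n\ge 10$'' is earned, and it must be done carefully because over $\FF_3$ the form $D$ restricted to $W$ could be nondegenerate (hyperbolic or anisotropic) or degenerate, and each case bounds differently the size of a pairwise-non-orthogonal set. A secondary subtlety is making sure the graph $G$ as constructed genuinely has $K_{n-2}$ as an \emph{induced} subgraph (automatic here, since we never delete clique edges) and that every vertex gets the intended neighborhood so that $\rS(\FF_3,G)$ is described exactly by the stated nonvanishing pattern. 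Everything else — translating the rank-$3$ hypothesis into the factorization $A=YDY\trans$, and reading off the linear constraints on the $y_i$ — is routine given Lemma~\ref{canformsymf2}'s $\FF_3$ analogue.
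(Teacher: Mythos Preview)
Your proposal has a genuine gap in the core counting step, and the construction you suggest does not earn the threshold $n\ge 10$.

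The fatal issue is that ``many pairwise non-orthogonal vectors in a $2$-plane of $\FF_3^{\,3}$'' is not a contradiction. The $y_i$ are not required to be distinct, so a single non-isotropic vector $v\in W$ can be repeated $n-2$ times and still give $y_iDy_j\trans=vDv\trans\ne 0$ for all $i\ne j$; this already realizes the clique block at rank $1$. Even forbidding repetitions, a $2$-plane over $\FF_3$ admits $4$ pairwise non-orthogonal vectors for every nondegenerate restricted form (e.g.\ $(1,0),(2,0),(1,1),(2,2)$ for the standard form) and $6$ for a rank-$1$ restricted form. Your two groups have sizes roughly $(n-2)/2$, which for $n=10$ is only $4$, so no pigeonhole contradiction appears. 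The earlier sentence ``at least $9$, which is why $n\ge 10$ enters'' is inconsistent with the later ``both $\ge 4$'', and neither version produces an obstruction. The actual obstruction must come from the \emph{mixture} of zero and nonzero constraints to both $n-1$ and $n$ simultaneously, and your sketch does not extract it.

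The paper's construction is different in a way that matters. It makes the three vertices $n-2,\,n-1,\,n$ pairwise nonadjacent (so one \emph{clique} vertex, $n-2$, is isolated from $n-1$ and $n$), and takes $A_{22}$ to be the $3\times 3$ diagonal block on these three vertices. The rows of $A_{12}\in\FF_3^{(n-3)\times 3}$ then all have nonzero first coordinate (the edge to $n-2$), and after diagonal scaling one may assume each row has first entry $1$. The graph is chosen so that $A_{12}$ has at least one row $(1,0,0)$ and at least two \emph{distinct} rows of each of the patterns $(1,*,0),\ (1,0,*),\ (1,*,*)$; this needs $n-3\ge 7$, which is exactly where $n\ge 10$ comes from. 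One then shows $A_{22}$ is invertible and runs a short four-case analysis over its possible diagonals $\diag(1,\beta,\gamma)$, using the Schur-complement identity $A_{11}=A_{12}A_{22}^{-1}A_{12}\trans$; in each case the coexistence of two distinct rows of a given pattern together with one row of another pattern forces some off-diagonal entry of $A_{11}$ to vanish. The crucial idea you are missing is to place a third ``controlling'' vertex (here $n-2$) so that the relevant block is $3\times 3$ diagonal, and then to prescribe enough \emph{distinct} row patterns rather than merely enough rows.
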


 \begin{proof}
 Let $G$ be a graph containing $K_{n-2}$ as an induced subgraph. We label the vertices of $G$ so that
 $1,2,\dots,n-2$ are the vertices of an $(n-2)$-clique of $G$. We assume that $n-2,n-1$ and $n$ are
 independent vertices of $G$, that is no two of them are adjacent. Let $A\in \rS(\FF_3,G)$ and partition
 $A$ as follows:
 \[
 A=
 \begin{bmatrix}
 A_{11} & A_{12}\\
 A\trans_{12} & A_{22}
 \end{bmatrix},
 \]
 where $A_{22}$ is $3\times 3$. Note that each row vector of $A_{12}$ is in $\FF^3_3$, and its first entry
 is nonzero, because vertex $n-2$ belongs to the given $(n-2)$-clique. Applying, if necessary, a suitable
 congruence transformation $D\trans AD$, where $D$ is an invertible diagonal matrix, we may assume without loss
 of generality that {\em the first entry} of each row vector of $A_{12}$ is~1.

 It follows that every row vector of $A_{12}$ has one of the following four patterns,\\
 \hspace*{2cm} (i)~~$(1,0,0)$; \  (ii)~~$(1,*,0)$; \ (iii)~~$(1,0,*)$; \ (iv)~~$(1,*,*)$,\\
 where $*$ denotes a nonzero element of $\FF_3$. We add now the following assumption on $G$ (expressed in terms of $A$):
 \begin{assum}
 The matrix $A_{12}$ has at least one row of the pattern (i), and at least two distinct rows of each of the other patterns.
 \end{assum}

 Observe first that each column of $A_{12}$ is not a zero column.
 Suppose that $\mr(\FF_3,G)\le 3$, and let $A\in \rS(\FF_3,G)$ be such that $\rank A=mr(\FF_3,G)$. Consider $A_{22}$.
 The independence of vertices $n-2,n-1$ and $n$ implies that $A_{22}=\diag(\alpha,\beta,\gamma)$, where $\alpha,\beta,\gamma\in\FF_3$. We claim that $A_{22}$ is invertible.

 Indeed, $A_{12}$ contains the row vectors $(1,*,*),(1,0,*)$ and $(1,0,0)$, and suppose that they are the $i$th, $j$th and $k$th rows of $A_{12}$. Since
 $
 \Bigl[
 \begin{smallmatrix}
 1 & * & *\\
 1 & 0 & *\\
 1 & 0 & 0
 \end{smallmatrix}
 \Bigr]
 $
 is invertible, it follows that $\rank A\ge 3$, so we must have $\rank A=3$. Moreover, rows $i,j$ and $k$ of $A$ span the row space of $A$. In particular, each of the nonzero rows $n-2,n-1$ and $n$ of $A$ is a linear combination of rows $i,j,$ and $k$, and hence $\alpha,\beta,\gamma\ne 0$, so $A_{22}$ is invertible.

 Let $E=A_{22}$. Since $\rank A=\rank E=3$, it follows, using Schur complement, that
 \[
 A_{11}=A_{12}E^{-1}A\trans_{12},
 \]
 and, in particular, all off-diagonal entries of $A_{12}E^{-1}A\trans_{12}$ must be nonzero.
 We will assume from now on that $\alpha=1$, as we can replace $E$ by $2E$ if necessary, so that
 $E=\diag(1,\beta,\gamma)$.

 We  will make repeated use of the requirement that all off-diagonal elements of
 $A_{12}E^{-1}A\trans_{12}$ must be nonzero. So if $x=(1,x_2,x_3)$ and $y=(1,y_2,y_3)$ are
 row vectors of any $2$ distinct rows of $A_{12}$ then
 \be\label{Eq2.1}
 1+\beta^{-1}x_2y_2+\gamma^{-1}x_3y_3=1+\beta x_2y_2+\gamma x_3y_3\ne 0.
 \ee

 Recall that $x^2=1$ for any $x\in\FF_3\backslash\{0\}$.  Furthermore for $x,y\in \FF_3\backslash\{0\}$ $1+xy\ne 0\iff x=y$.
 We distinguish four cases.

 \underline{Case~1.}~~$\beta=\gamma=1$, so $E=I_3$.
 Assume that $(1,x,0),(1,0,y)$ are two rows of the pattern (ii) and (iii) respectively.
 Let $(1,z_1,z_2)$ be a row of the pattern (iv).  \eqref{Eq2.1} for the pairs $(1,x,0),\z$ and $(1,0,y),\z$ yield that $\z=(1,x,y)$.
 Since there are at least two distinct rows of the form $(1,x,y)$ we contradict \eqref{Eq2.1}.

 \underline{Case~2.}~~$\beta=1,\gamma=2$, so $E=E^{-1}=\diag(1,1,2)$.

 Consider the rows of $A_{12}$ with pattern $(1,0,*)$. By~\eqref{Eq2.1}, no two of them can be $(1,0,1)$ and no two of them can be $(1,0,2)$.
 Hence $A_{12}$ has exactly two rows $(1,0,1)$ and $(1,0,2)$.  Let $\z=(1,z_1,z_2)$ be a row of the pattern (iv).
 Then \eqref{Eq2.1} cannot hold for the two pairs $(1,0,1),\z$ and $(1,0,2),\z$.

 \underline{Case~3.}~~$\beta=2,\gamma=1$, so $E=E^{-1}=\diag(1,2,1)$.

 The contradiction is obtained as in Case~2.

 \underline{Case~4.}~~$\beta=\gamma=2$, so $E=E^{-1}=\diag(1,2,2)$.

 The contradiction is obtained as in Case~2.  This concludes the proof of the theorem.
 \end{proof}

 \section{The case of a finite non-prime field.}

 In this section we assume that $\FF$ is any finite non-prime field. We prove
 \begin{theorem}\label{Th3.1}
 Let $G$ be a graph on $n\ge 5$ vertices, and let $4\le k\le n-1$. Suppose that $G$ contains $K_k$
 as an induced subgraph. Let $\FF$ be a finite non-prime field of characteristic $p$. Then
 $\mr(\FF,G)\le n-k+1$.
 \end{theorem}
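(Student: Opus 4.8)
The plan is to reduce to a normal form for $A \in \rS(\FF,G)$ and then use the non-primality of $\FF$ to find a rank-one perturbation of the clique block that cancels against the off-clique part, in the spirit of Theorem~A. First I would relabel the vertices so that $1,\dots,k$ carry the $K_k$, and partition any $A\in\rS(\FF,G)$ as
\[
A=\begin{bmatrix} A_{11} & A_{12}\\ A\trans_{12} & A_{22}\end{bmatrix},
\]
with $A_{11}$ the $k\times k$ clique block. The aim is to produce one matrix $A\in\rS(\FF,G)$ whose rank is $n-k+1$; since $A_{22}$ is $(n-k+1)\times(n-k+1)$ and the rank is at least the size of any nonsingular principal submatrix, it will be natural to force $A_{11}=A_{12}A_{22}^{-1}A\trans_{12}$ (Schur complement zero), provided $A_{22}$ can be chosen invertible, which is the content of Lemma~A.2 of \cite{JLS} adapted to finite fields — this is exactly where Theorem~A fails for finite fields, because over $\FF_q$ the map $B\mapsto A_{12}B^{-1}A\trans_{12}$ need not hit a matrix with all the prescribed nonzero/zero off-diagonal pattern on the clique.

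The key new idea must exploit that $\FF$ is non-prime: if $\FF$ has characteristic $p$ then $\FF \supsetneq \FF_p$, so there is an element $t\in\FF\setminus\FF_p$, and more importantly $\FF$ contains a copy of $\FF_p$ as a proper subfield together with "new" scalars $t$ whose successive powers are $\FF_p$-linearly independent. I would pick the off-clique structure (the columns of $A_{12}$ and the diagonal of $A_{22}$, and whatever edges among $k+1,\dots,n$ are needed) with entries arranged so that the $(i,j)$ entry of $A_{12}A_{22}^{-1}A\trans_{12}$ for $1\le i<j\le k$ becomes a nontrivial $\FF_p$-linear combination of distinct powers of $t$, hence automatically nonzero. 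Concretely, if one free parameter per clique vertex is available (e.g. an edge from vertex $i$ to one designated outside vertex, or the ability to scale), one can arrange the $i$th row of $A_{12}$ to be something like $(1, t^{i}, \dots)$ up to the constraints imposed by $G$'s edges, so that the Gram-type products $\langle x_i, x_j\rangle$ land in a region of $\FF$ that avoids $0$. The bookkeeping — which outside vertices each $i\le k$ must be adjacent to, and how to realize the required nonzero pattern while keeping $A_{22}$ invertible — will have to be handled by choosing the outside block judiciously; very likely one reduces first to the worst case $k=n-1$ or to adding a single new vertex connected in a controlled way, and bootstraps.

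The step I expect to be the main obstacle is precisely showing that the Schur-complement equation $A_{11}=A_{12}A_{22}^{-1}A\trans_{12}$ can be solved with $A_{11}$ having the exact sign pattern of the clique (all off-diagonal entries nonzero) \emph{and} $A_{12}$ having the exact pattern forced by the bipartite adjacency between $\{1,\dots,k\}$ and $\{k+1,\dots,n\}$ \emph{and} $A_{22}$ invertible with the correct pattern among $\{k+1,\dots,n\}$. Over an infinite field this is a genericity argument; over $\FF_q$ one genuinely needs the non-prime hypothesis to guarantee enough "room." I would isolate a clean lemma of the form: if $\FF$ is finite non-prime and $x_1,\dots,x_k$ are prescribed to be distinct nonzero vectors in $\FF^m$ subject to certain support constraints, then one can choose a diagonal (or block) $D$ so that $x_i\trans D x_j\ne 0$ for all $i\ne j$; the proof of that lemma would use a counting/union-bound argument showing the bad set of $D$'s is too small because $|\FF|\ge p^2 = $ something comfortably larger than needed, combined with an explicit construction using powers of a generator $t$ of $\FF^*$ or of $\FF$ over $\FF_p$. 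Once that lemma is in place, assembling it with the reduction to $A_{22}$ invertible (Lemma~A.2 style, which needs the length-two paths provided by the clique, just as in Theorem~A) finishes the proof.
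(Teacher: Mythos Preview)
Your framework is right (Schur complement, exploit $\FF\supsetneq\FF_p$), but two concrete steps fail as written. First, there is a partition muddle: with $A_{11}$ of size $k\times k$ the block $A_{22}$ is $(n-k)\times(n-k)$, not $(n-k+1)\times(n-k+1)$, and aiming for Schur complement zero gives rank $n-k$, not $n-k+1$. More seriously, with that partition a clique vertex $i\le k$ may have no neighbours in $\{k+1,\dots,n\}$, forcing row $i$ of $A_{12}$ to be zero; then the $(i,j)$ entry of $A_{12}A_{22}^{-1}A_{12}\trans$ is $0$ for every $j$, and no choice of $D$ can make $x_i\trans D x_j\ne 0$. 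Your proposed lemma therefore cannot hold in the generality you need. Second, even when the rows are nonzero, the union-bound you sketch collapses for large $k$: there are $\binom{k}{2}$ constraints of the form $x_i\trans D x_j\ne 0$, each a hyperplane in the parameter space of $D$, and the inequality $\binom{k}{2}<|\FF|$ that the counting needs is exactly the ``$|\FF|\ge k-1$'' hypothesis of Bank's result that the theorem is meant to improve.

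The paper's proof avoids both obstacles with a single clean idea you almost reach but do not isolate. Keep $A_{11}$ of size $k\times k$, but aim for Schur complement equal to $J_k$ rather than $0$; then $\rank A=(n-k)+1$ and the all-ones contribution automatically supplies the nonzero off-diagonal entries coming from clique vertices with empty outside neighbourhood. Now choose every nonzero entry of $A_{12}$ to be $1$, build an invertible $B\in\rS(\FF_p,H)$ for the induced subgraph $H$ on $\{k+1,\dots,n\}$ by fixing off-diagonal entries to $1$ and adjusting the diagonal so all leading principal minors equal $1$, and set $A_{22}=\beta^{-1}B$ with $\beta\in\FF\setminus\FF_p$. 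Then $A_{12}A_{22}^{-1}A_{12}\trans=\beta M$ with $M$ over $\FF_p$, so every off-diagonal entry of $A_{11}:=J_k+\beta M$ has the form $1+\beta a$ with $a\in\FF_p$; this is never zero because $\beta\notin\FF_p$. One element $\beta$ handles all $\binom{k}{2}$ constraints simultaneously, with no counting and no dependence of $|\FF|$ on $k$.
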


 \begin{proof}
 The assumption on $\FF$ implies that $\FF$ is a finite extension of $\FF_p$, and $\FF\ne\FF_p$.
 We label the vertices of $G$ so that $1,2,\dots,k$ are the vertices of a $k$-clique. Let $A\in \rS(\FF,G)$ and
 partition $A$ be as follows:
 \[
 A=
 \begin{bmatrix}
 A_{11} & A_{12}\\
 A\trans_{12} & A_{22}
 \end{bmatrix},
 \]
 where $A_{11}$ is $k\times k$. Let $H$ be the subgraph of $G$ induced by vertices $k+1,k+2,\dots,n$.
 It is straightforward to see that there exists $B\in \rS(\FF_p,H)$ which is invertible. Indeed, let every
 nonzero off-diagonal entry of $B$ be 1, and let $b_{11}=1$. Then we can choose $b_{22}\in\FF_p$ so that
 the principal minor of order~2 in the top left corner is~1. Similarly, we can sequentially choose
 $b_{33},\dots,b_{n-k,n-k}$ in $\FF_p$ so that all leading principal minors of $B$ are~1. We now pick
 every nonzero entry of $A_{12}$ to be~1, and we let $A_{22}=\beta^{-1}B$, where $\beta\in F/\FF_p$.
 Hence $A^{-1}_{22}=\beta B^{-1}$, where all entries of $B^{-1}$ are in $\FF_p$.

 We claim that $A_{11}$ can be chosen so that $A_{11}\in \rS(\FF,K_k)$ and so that
 \[
 A_{11}-A_{12}A^{-1}_{22}A\trans_{12}=J_k,
 \]
 where $J_k$ is the all ones $k\times k$ matrix. Indeed, let
 \[
 A_{11}=J_k+A_{12}A^{-1}_{22}A\trans_{12}=J_k+\beta A_{12}B^{-1}A\trans_{12}.
 \]
 Every off-diagonal element of $J_k+\beta A_{12}B^{-1}A\trans_{12}$ is of the form $1+\beta a, \ a\in\FF_p$,
 and so must be nonzero. For our choice of $A_{11},A_{12}$ and $A_{22}$, the Schur complement of
 $A_{22}$ in $A$ has rank one, so $\rank A=n-k+1$. Hence $\mr(\FF,G)\le n-k+1$.
 \end{proof}

 \section{The case $k=n-3$.}

 In this section we consider the case $k=n-3$, that is, we assume that $G$ contains an $(n-3)$-clique. We prove
 \begin{theorem}\label{Th4.1}
 Let $G$ be a graph on $n\ge 5$ vertices, and suppose that $G$ contains $K_{n-3}$ as an induced subgraph.
 Then, $\mr(\FF,G)\le 4$ for every field $\FF$ with $|\FF|>3$.
 \end{theorem}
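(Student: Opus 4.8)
The plan is to mimic the structure of the previous two theorems: label the vertices so that $1,2,\dots,n-3$ form the $(n-3)$-clique, and try to build an explicit matrix $A\in\rS(\FF,G)$ of rank $4$ by prescribing a small Schur complement. Partition
\[
A=\begin{bmatrix} A_{11} & A_{12}\\ A\trans_{12} & A_{22}\end{bmatrix},
\]
where $A_{22}$ is the $3\times 3$ trailing block corresponding to the three extra vertices $n-2,n-1,n$. The idea is to choose $A_{22}$ invertible and $A_{12}$ so that, setting $A_{11}:=J_{n-3}+A_{12}A_{22}^{-1}A\trans_{12}$, the Schur complement of $A_{22}$ in $A$ equals the rank-one matrix $J_{n-3}$, giving $\rank A=\rank A_{22}+1=4$, while every off-diagonal entry of $A_{11}$ is nonzero so that $A_{11}\in\rS(\FF,K_{n-3})$.

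First I would record the three obstructions that must be met simultaneously: (a) $A_{22}=[a_{ij}]_{i,j=n-2}^n$ must be a symmetric matrix whose off-diagonal pattern matches the induced subgraph $H$ on $\{n-2,n-1,n\}$, and it must be invertible over $\FF$; (b) each row of $A_{12}$ is indexed by a clique vertex, so its entries must respect the adjacencies between that clique vertex and the three extra vertices, and in particular at least one coordinate may be forced to be $0$ or forced to be nonzero; (c) every off-diagonal entry of $J_{n-3}+A_{12}A_{22}^{-1}A\trans_{12}$ must be nonzero. Condition (c) is the crux: if $x,y$ are two distinct rows of $A_{12}$, I need $1+xA_{22}^{-1}y\trans\ne 0$, i.e.\ $xA_{22}^{-1}y\trans\ne -1$. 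The freedom I have is that the nonzero entries of $A_{12}$ and the diagonal entries of $A_{22}$ (and, when $H$ has edges, the off-diagonal entries of $A_{22}$) may be \emph{any} elements of $\FF$ of the appropriate type, and $\FF$ has at least $4$ elements. The natural move, exactly as in Theorem~\ref{Th3.1}, is to take all ``free nonzero'' entries of $A_{12}$ to be $1$ and to choose $A_{22}=\beta^{-1}B$ with $B$ a fixed invertible matrix over the prime subfield and $\beta$ a suitable scalar; then every entry of $A_{12}A_{22}^{-1}A\trans_{12}$ lies in $\beta\cdot(\text{prime subfield})$, but over a prime field this does not immediately work, so one must argue more carefully — presumably by casework on how many distinct row patterns of $A_{12}$ can occur (there are at most $2^3=8$), using that $|\FF|\ge 4$ to avoid the single bad value $-1$ for each of the finitely many pairwise inner products.

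The step I expect to be the main obstacle is handling $\FF=\FF_4$ (and $\FF_5,\FF_7,\FF_8,\FF_9$), where scaling tricks are weak: here the bilinear form $x,y\mapsto xA_{22}^{-1}y\trans$ on the at-most-eight admissible row vectors must avoid the value $-1$ off the diagonal. I would organize this as follows. After a diagonal congruence $A\mapsto D\trans AD$, normalise so the clique-to-$(n-2)$ edges give first coordinate $1$ in every row of $A_{12}$ (vertex $n-2$ may be chosen adjacent to all clique vertices, or else a symmetric normalisation is used), reducing the admissible row set to a subset of $\{1\}\times\FF^2$ with prescribed zero/nonzero patterns in the last two coordinates. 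Then I would pick $A_{22}$ — using the extra room in $\FF$ when $|\FF|>3$ — so that $A_{22}^{-1}$ is, say, $\diag(c,c,c)$ for a well-chosen $c$ when $H$ has no edges, or an appropriate pattern matrix otherwise, and check the finitely many inequalities $1+c(x_2y_2+x_3y_3)\ne 0$ directly; the point is that $x_2y_2+x_3y_3$ ranges over a small explicit subset of $\FF$, and choosing $c$ (equivalently $\beta$) to dodge $-1/s$ for each such $s\ne 0$ is possible precisely because there are at most three ``slopes'' $s$ to avoid while $|\FF^{\times}|\ge 3$. Finally I would note that $A_{11}=J_{n-3}+A_{12}A_{22}^{-1}A\trans_{12}$ is automatically symmetric with the correct (complete-graph) off-diagonal pattern once (c) holds, and that the Schur complement identity then forces $\rank A=\rank A_{22}+\rank J_{n-3}=3+1=4$, whence $\mr(\FF,G)\le 4$.
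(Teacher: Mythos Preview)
Your overall architecture is exactly the paper's: partition $A$, pick $A_{22}$ invertible with the correct off-diagonal pattern for the induced subgraph $H$ on $\{n-2,n-1,n\}$, set $A_{11}=J_{n-3}+A_{12}A_{22}^{-1}A\trans_{12}$, and then check that $1+xA_{22}^{-1}y\trans\ne 0$ for all pairs of admissible rows. The paper also groups the rows of $A_{12}$ into the eight $0/\!*$ patterns and (initially) sets every nonzero entry to $1$, so your instincts are right. Note, incidentally, that no diagonal congruence is needed: the nonzero entries of $A_{12}$ are free, so you may simply \emph{choose} them; your attempt to force the first coordinate to be $1$ via congruence fails precisely because vertex $n-2$ need not be adjacent to every clique vertex, and there is no ``symmetric normalisation'' that repairs this.

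The real gap is the claim that there are ``at most three slopes'' to avoid, making $|\FF^{\times}|\ge 3$ enough. This is where the work lies, and the assertion is false as stated. The paper proceeds by four cases according to the off-diagonal pattern of $A_{22}$ (equivalently, the edge set of $H$): diagonal; exactly one off-diagonal pair nonzero; exactly two; all three. In each case a specific $A_{22}$ is written down and the resulting list of values $xA_{22}^{-1}y\trans$ is computed explicitly. When $H$ has edges the slope set is larger than you anticipate: in Case~3 one gets the constraints $1\pm\beta,1+2\beta,1+3\beta\ne 0$, and in Case~4 (all off-diagonal entries nonzero) one gets $1+a,1+2a,1+3a,1+4a,1+6a\ne 0$. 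Both of these fail over $\FF_5$ with the all-ones choice for $A_{12}$, and the paper has to go back and pick some of the $\alpha$'s to be $-1$ (and in one spot $2$) to make $\FF_5$ work; $\FF_4$ is handled by appealing to Theorem~\ref{Th3.1}. So the finish is not a one-line pigeonhole on $|\FF^\times|$ but an honest case analysis with special treatment of the smallest fields; your sketch stops short of this and the ``three slopes'' heuristic would leave $\FF_5$ uncovered.
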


 \begin{proof}
 We assume that $\FF$ is a finite field with $|\FF|>3$. Let us label the vertices of $G$ so that
 $1,2,\dots,n-3$ are the vertices of an $(n-3)$-clique in $G$. Let $A\in \rS(\FF,G)$ and partition $A$
 as follows
 \[
 A=
 \begin{bmatrix}
 A_{11} & A_{12}\\
 A\trans_{12} & A_{22}
 \end{bmatrix},
 \]
 where $A_{22}$ is $3\times 3$. Our goal is to show that $A$ can be chosen so that $A_{22}$ is invertible and its Schur complement in $A$ is $J_{n-3}$. Then it follows for this $A$ that $\rank A=4$, implying $\mr(\FF,G)\le 4$.

 Denote by $\1$ the all ones vector (of order that should be clear from the discussion).
 Each column vector of $A\trans_{12}$ must be of one of the following eight patterns:
 \[
 \begin{bmatrix}
 0\\0\\0
 \end{bmatrix},
 \begin{bmatrix}
 *\\0\\0
 \end{bmatrix},
 \begin{bmatrix}
 0\\ * \\0
 \end{bmatrix},
 \begin{bmatrix}
 0\\0\\ *
 \end{bmatrix},
 \begin{bmatrix}
 *\\ *\\0
 \end{bmatrix},
 \begin{bmatrix}
 *\\0\\ *
 \end{bmatrix},
 \begin{bmatrix}
 0\\ *\\ *
 \end{bmatrix},
 \begin{bmatrix}
 *\\ *\\ *
 \end{bmatrix},
 \]
 where $*$ denotes a nonzero entry. We may assume without loss of generality that the columns of
 $A\trans_{12}$ are already arranged in eight groups (although not all  of them must be present) according
 to these patterns. Moreover, we pick the nonzero entries of $A\trans_{12}$ so that all columns with the
 same pattern are equal. In fact, we pick the entries of $A\trans_{12}$ so that
 \[
 A\trans_{12}=
 \begin{bmatrix}
 0 & \alpha_1\1\trans & 0 & 0 & \alpha_4\1\trans & \alpha_6\1\trans & 0 & \alpha_{10}\1\trans\\
 0 & 0 & \alpha_2\1\trans & 0 & \alpha_5\1\trans & 0 & \alpha_8\1\trans & \alpha_{11}\1\trans\\
 0 & 0 & 0 & \alpha_3\1\trans & 0 & \alpha_7\1\trans & \alpha_9\1\trans & \alpha_{12}\1\trans
 \end{bmatrix}
 \in \FF^{3,n-3},
 \]
 where $\alpha_1,\alpha_2,\dots,\alpha_{12}$ will be determined later, and not all block columns must
 be present.

 We will distinguish four cases, according to the pattern of the entries of $A_{22}$. In each case we
 pick the entries of $A_{12}$ and $A_{22}$ so that $A_{22}$ is nonsingular and so that all off-diagonal
 entries of $J_{n-3}+A_{12}A^{-1}_{22}A\trans_{12}$ are nonzero. Then we let
 $A_{11}=J_{n-3}+A_{12}A^{-1}_{22}A\trans_{12}$, and we get what we want.

 \underline{Case~1.}~~Suppose that $A_{22}$ is a diagonal matrix. we let
  $A_{22}=\beta^{-1}I_3$, where $0\ne\beta\in \FF$. Hence $A^{-1}_{22}=\beta I_3$.
  A straightforward computation yields
 \[
 A_{12}A^{-1}_{22}=\beta
 \begin{bmatrix}
 0 & 0 & 0 \\
 \alpha_1\1 & 0 & 0\\
 0 & \alpha_2\1 & 0\\
 0 & 0 & \alpha_3\1\\
 \alpha_4\1 & \alpha_5\1 & 0\\
 \alpha_6\1 & 0 & \alpha_7\1\\
 0 & \alpha_8\1 & \alpha_9\1\\
 \alpha_{10}\1 & \alpha_{11}\1 & \alpha_{12}\1
 \end{bmatrix},
 \]
 so all off-diagonal entries of $A_{12}A^{-1}_{22}A\trans_{12}$ are contained in
 \be
 &&\Bigl\{0,\beta\alpha^2_1,\beta\alpha_1\alpha_4,\beta\alpha_1\alpha_6,\beta\alpha_1\alpha_{10},
 \beta\alpha^2_2,\beta\alpha_2\alpha_5,\beta\alpha_2\alpha_8,\beta\alpha_2\alpha_{11},\beta\alpha^2_3,\beta\alpha_3\alpha_7,\notag\\
 &&\beta\alpha_3\alpha_9,\beta\alpha_3\alpha_{12},\beta\bigl(\alpha^2_4+\alpha^2_5\bigr),
 \beta\alpha_4\alpha_6,\beta\alpha_5\alpha_8,\beta\bigl(\alpha_4\alpha_{10}+\alpha_5\alpha_{11}\bigr),
 \notag\\
 &&\beta\bigl(\alpha^2_6+\alpha^2_7 \bigr),\beta\alpha_7\alpha_9,\beta(\alpha_6\alpha_{10}+\alpha_7\alpha_{12}),
 \beta\bigl(\alpha^2_8+\alpha^2_9\bigr),
 \beta\bigl(\alpha_8\alpha_{11}+\alpha_9\alpha_{12}\bigr),\notag\\
 &&\beta\bigl(\alpha^2_{10}+\alpha^2_{11}+\alpha^2_{12} \bigr)\Bigr\}.\notag
 \ee
 Choose now all $\alpha$'s to be~1. So the constraints on $\beta$ are
 \[
 \beta\ne 0; \ 1+\beta\ne 0; \ 1+2\beta\ne 0; \ 1+3\beta\ne 0.
 \]
 For any $\FF$ with $|\FF|\ge 5$ one can choose $\beta\in F$ to satisfy these constraints.
 For $\FF$ with $|\FF|=4$ one can apply Theorem~\ref{Th3.1} or note that for this field $2=0$ and $3=1$,
 so there are only two constraints.

 \underline{Case~2.}~~Suppose that exactly two of the entries above the main diagonal of $A_{22}$ are zero.
 We assume that they are the elements in the $1,2$ and $1,3$ positions (other possibilities are handled in
 a similar way). Let
 \[
 A_{22}=\beta^{-1}
 \begin{bmatrix}
 1 & 0 & 0\\
 0  & 0 & 1\\
 0 & 1 & 0
 \end{bmatrix}, \ \text{where} \ 0\ne\beta\in \FF, \ \text{so} \ A^{-1}_{22}=\beta
 \begin{bmatrix}
 1 & 0 & 0\\
 0 & 0 & 1\\
 0 & 1 & 0
 \end{bmatrix}.
 \]
 Hence
 \[
 A_{12}A^{-1}_{22}=\beta
 \begin{bmatrix}
 0 & 0 & 0\\
 \alpha_1\1 & 0 & 0\\
 0 & 0 &\alpha_2\1\\
 0 & \alpha_3\1 & 0\\
 \alpha_4\1 & 0 & \alpha_5\1\\
 \alpha_6\1 & \alpha_7\1 & 0\\
 0 & \alpha_9\1 & \alpha_8\1\\
 \alpha_{10}\1 & \alpha_{12}\1 & \alpha_{11}\1
 \end{bmatrix},
 \]
 and a discussion similar to the one in Case~1 shows that if all $\alpha$'s are chosen to
 be~1 then the constraints on $\beta$ are again
 \[
 \beta\ne 0; \ 1+\beta\ne 0; \ 1+2\beta\ne 0; \ 1+3\beta\ne 0.
 \]

 \underline{Case~3.}~~Suppose that exactly one entry above the main diagonal of $A_{22}$ is zero. We assume that it is the $1,2$ entry (other possibilities are handled similarly). Let
 $
 A_{22}=\beta^{-1}\Bigl[
 \begin{smallmatrix}
 0 & 0 & 1\\
 0 & 1 & 1\\
 1 & 1 & 0
 \end{smallmatrix}\Bigr],
 $
 where $0\ne\beta\in \FF$, so $A^{-1}_{22}=\beta\Bigl[
 \begin{smallmatrix}
 &1 & -1 & 1\\
 -&1 & \ 1 & 0\\
 &1 & \ 0 & 0
 \end{smallmatrix}\Bigr].
 $
 Hence
 \[
 A_{12}A^{-1}_{22}=\beta
 \begin{bmatrix}
 0 & 0 & 0\\
 \alpha_1\1 & -\alpha_1\1 & \alpha_1\1\\
 -\alpha_2\1 & \alpha_2\1 & 0\\
 \alpha_3\1 & 0 & 0\\
 (\alpha_4-\alpha_5)\1 & (-\alpha_4+\alpha_5)\1 & \alpha_4\1\\
 (\alpha_6+\alpha_7)\1 & -\alpha_6\1 & \alpha_6\1\\
 (-\alpha_8+\alpha_9)\1 & \alpha_8\1 & 0\\
 (\alpha_{10}-\alpha_{11}+\alpha_{12})\1 & (-\alpha_{10}+\alpha_{11})\1 & \alpha_{10}\1
 \end{bmatrix},
 \]
 so all of the diagonal entries of $A_{12}A^{-1}_{22}A\trans_{12}$ are contained in
 \be
 &&\Bigl\{
 0,\beta\alpha^2_1,-\beta\alpha_1\alpha_2,\beta\alpha_1\alpha_3,\beta\alpha_1(\alpha_4-\alpha_5),
 \beta\alpha_1(\alpha_6+\alpha_7),\beta\alpha_1(-\alpha_8+\alpha_9),\notag\\
 &&\beta\alpha_1(\alpha_{10}-\alpha_{11}+\alpha_{12}),\beta\alpha^2_2,\beta\alpha_2(-\alpha_4+\alpha_5),-\beta\alpha_2\alpha_6,
 \beta\alpha_2\alpha_8,\beta\alpha_2(-\alpha_{10}+\alpha_{11}),\notag\\
 &&\beta\alpha_3\alpha_4,\beta\alpha_3\alpha_6,\beta\alpha_3\alpha_{10},\beta\bigl(\alpha_4(\alpha_4-\alpha_5)+
 \alpha_5(-\alpha_4+\alpha_5)\bigr),\beta\bigl(\alpha_6(\alpha_4-\alpha_5)+\alpha_4\alpha_7\bigr),\notag\\
 &&\beta\bigl(\alpha_8(-\alpha_4+\alpha_5)+\alpha_4\alpha_9\bigr),\beta\bigl(\alpha_{10} (\alpha_4-\alpha_5)+
 \alpha_{11}(-\alpha_4+\alpha_5)+\alpha_4\alpha_{12}\bigr),\notag\\
 &&\beta\bigl(\alpha_6(\alpha_6+\alpha_7)+\alpha_6\alpha_7\bigr),\beta(-\alpha_6\alpha_8+\alpha_6\alpha_9),
 \beta\bigl(\alpha_{10} (\alpha_6+\alpha_7)-\alpha_6\alpha_{11}+\alpha_6\alpha_{12}\bigr),\notag\\
 &&\beta\alpha^2_8,\beta\bigl(\alpha_{10}(-\alpha_8+\alpha_9)+\alpha_8\alpha_{11}\bigr),\notag\\
 &&\beta\bigl(\alpha_{10}(\alpha_{10}-\alpha_{11}+\alpha_{12})+\alpha_{11}(-\alpha_{10}+\alpha_{11})+\alpha_{10}\alpha_{12}
 \bigr)\Bigr\}.\notag
 \ee

 We can assume that $|\FF|\ne 4$, as our theorem holds for the field with four elements by
 Theorem~\ref{Th3.1}. Suppose also that $\FF\ne\FF_5$, so $|\FF|>5$. Let all $\alpha$'s be chosen to be~1.
 Then the constraints on $\beta$ are
 \[
 \beta\ne 0; \ 1+\beta\ne 0; \  1-\beta\ne 0; \ 1+2\beta\ne 0; \ 1+3\beta\ne 0,
 \]
 and they can be satisfied. It remains to consider the case $\FF=\FF_5$. In this case we let
 $\alpha_7=-1$ and all other $\alpha$'s be~1. A straightforward computation shows that the constraints
 on $\beta$ are now
 \[
 \beta\ne 0; \ 1+\beta\ne 0; \ 1-\beta\ne 0; \ 1+2\beta\ne 0,
 \]
 so $\beta=3$ works.

 \underline{Case~4.}~~We assume now that all off-diagonal entries of $A_{22}$ are nonzero.
 We pick $a\in \FF$ so that $a\ne 0$. By Theorem~\ref{Th3.1}, we may assume that the characteristic of
 $\FF\ne 2$. Let
 $$
 A_{22}=\frac{1}{2a}
 \begin{bmatrix}
 -1      & \ \ 1 & \ \ 1\\
 \ \; 1  & -1 & \ \ 1\\
  \ \; 1 & \ \ 1 & -1
 \end{bmatrix}.
 $$
 Then all off-diagonal entries of $A_{22}$ are nonzero, $A_{22}$ is invertible, and
 $$
 A_{22}^{-1}=
 \begin{bmatrix}
 0 & a & a\\
 a & 0 & a\\
 a & a & 0
 \end{bmatrix}.
 $$
 Hence,
 \[
 A_{12}A^{-1}_{22}=
 \begin{bmatrix}
 0 & 0 & 0\\
 0 & \alpha_1a\1 & \alpha_1a\1\\
 \alpha_2a\1 & 0 & \alpha_2a\1\\
 \alpha_3a\1 & \alpha_3a\1 & 0\\
 \alpha_5a\1 & \alpha_4a\1 & (\alpha_4+\alpha_5)a\1\\
 \alpha_7a\1 & (\alpha_6+\alpha_7)a\1 & \alpha_6a\1\\
 (\alpha_8+\alpha_9)a\1 & \alpha_9a\1 & \alpha_8a\1\\
 (\alpha_{11}+\alpha_{12})a\1 & (\alpha_{10}+\alpha_{12})a\1 & (\alpha_{10}+\alpha_{11})a\1
 \end{bmatrix},
 \]
 so all off-diagonal entries of $A_{12}A^{-1}_{22}A\trans_{12}$ are contained in
 \be
 &&\Bigl\{
 0,\alpha_1\alpha_2a,\alpha_1\alpha_3a,\alpha_1\alpha_5a,\alpha_1\alpha_7a,\alpha_1(\alpha_8+\alpha_9)a,
 \alpha_1(\alpha_{11}+\alpha_{12})a,\notag\\
 &&\alpha_2\alpha_3a,\alpha_2\alpha_4a,\alpha_2(\alpha_6+\alpha_7)a,\alpha_2\alpha_9a,
 \alpha_2(\alpha_{10}+\alpha_{12})a,\alpha_3(\alpha_4+\alpha_5)a,\alpha_3\alpha_6a,\notag\\
 &&\alpha_3\alpha_8a,\alpha_3(\alpha_{10}+\alpha_{11})a,2\alpha_4\alpha_5a,
 \bigl(\alpha_5\alpha_6+(\alpha_4+\alpha_5)\alpha_7\bigr)a,\bigl(\alpha_4\alpha_8+(\alpha_4+\alpha_5)\alpha_9 \bigr)a, \notag\\
 &&\bigl(\alpha_5\alpha_{10}+\alpha_4\alpha_{11}+(\alpha_4+\alpha_5)\alpha_{12}\bigr)a,2\alpha_6\alpha_7a,
 \bigl((\alpha_6+\alpha_7)\alpha_8+\alpha_6\alpha_9 \bigr)a,\notag\\
 &&\bigl(\alpha_7\alpha_{10}+(\alpha_6+\alpha_7)\alpha_{11}+\alpha_6\alpha_{12} \bigr)a,2\alpha_8\alpha_9a,
 \bigl((\alpha_8+\alpha_9)\alpha_{10}+\alpha_9\alpha_{11}+\alpha_8\alpha_{12} \bigr)a,\notag\\
 &&\bigl((\alpha_{11}+\alpha_{12})\alpha_{10}+(\alpha_{10}+\alpha_{12})\alpha_{11}+(\alpha_{10}+\alpha_{11})\alpha_{12} \bigr)a\Bigr\}.\notag
 \ee

 Suppose first that $\FF\ne\FF_5$, so $|\FF|\ge 7$. Letting all $\alpha$'s be~1 we obtain the constraints on $a$:
 \[
 a\ne 0; \ 1+a\ne 0;  \ 1+2a\ne 0; \ 1+3a\ne 0; \ 1+4a\ne 0; \ 1+6a\ne 0.
 \]
 These constraints can be satisfied in any finite field with at least seven elements.

 In case $\FF=\FF_5$, let $\alpha_1=\alpha_2=\alpha_3=\alpha_4=\alpha_6=\alpha_8=\alpha_{10}=1, \
 \alpha_5=\alpha_7=\alpha_9=\alpha_{11}=-1$ and $\alpha_{12}=2$. This choice yields the constraints
 \[
 a\ne 0, \ 1+a\ne 0; \ 1-a\ne 0; \ 1+3a\ne 0,
 \]
 so the choice $a=2$ works.
 \end{proof}

 \end{document}